\documentclass[11pt]{article}
\usepackage{amsthm}
\usepackage{amsmath}
\usepackage{amscd}

\usepackage[all]{xy}
\usepackage{tikz}
\usetikzlibrary{positioning} 
\usetikzlibrary{arrows,decorations.pathmorphing,shapes} 
\usepackage{tikz-cd}

\usepackage{graphicx, color}
\usepackage[T1]{fontenc}
\usepackage{caption}
\usepackage[latin2]{inputenc}
\usepackage[mathscr]{eucal}
\usepackage{indentfirst}
\usepackage{graphicx}
\usepackage{graphics}
\usepackage{pict2e}
\usepackage{epic}
\numberwithin{equation}{section}
\usepackage[margin=2.9cm]{geometry}
\usepackage{epstopdf} 
\usepackage{latexsym}
\usepackage[hyperfootnotes=false]{hyperref}
\usepackage{xargs}  

\usepackage{titlefoot}

\usepackage{float}
\usepackage{amsmath, amsfonts, amssymb, amsxtra, amsthm, mathrsfs}
\usepackage{enumitem}

\usepackage[colorinlistoftodos,prependcaption,textsize=tiny]{todonotes}

\newcommandx{\unsure}[2][1=]{\todo[linecolor=red,backgroundcolor=red!25,bordercolor=red,#1]{#2}}
\newcommandx{\change}[2][1=]{\todo[linecolor=blue,backgroundcolor=blue!25,bordercolor=blue,#1]{#2}}
\newcommandx{\info}[2][1=]{\todo[linecolor=OliveGreen,backgroundcolor=OliveGreen!25,bordercolor=OliveGreen,#1]{#2}}
\newcommandx{\improvement}[2][1=]{\todo[linecolor=Plum,backgroundcolor=Plum!25,bordercolor=Plum,#1]{#2}}



\usepackage[calcwidth]{titlesec}
\newcommand{\periodafter}[1]{#1.}
\titleformat{\paragraph}[runin]{\bfseries}{\theparagraph}{}{\periodafter}

\theoremstyle{plain}
\newtheorem{Thm}{Theorem}[section]
\newtheorem{Lemma}[Thm]{Lemma}
\newtheorem{Cor}[Thm]{Corollary}
\newtheorem{Prop}[Thm]{Proposition}

 \theoremstyle{definition}

\newtheorem{?}[Thm]{Problem}

\newtheorem{Ex}{Example}
\newtheorem*{Rmk*}{Remark}
\newenvironment{Ex*}
 {\pushQED{\qed}\Ex}
 {\popQED\endEx}

\allowdisplaybreaks

\usepackage{tocloft}
\usepackage{authblk}

\setcounter{tocdepth}{1}
\setlength{\cftbeforesecskip}{-1.8pt}

\setlength{\cftbeforepartskip}{2pt}

\setlength{\cftsecindent}{2.9em}
\setlength{\cftsubsecindent}{3.2em}
\setlength{\cftpartindent}{2.4em}

\cftpagenumbersoff{part}

\usepackage{graphicx} 

\title{\bfseries Intersection Points of Closed Geodesics on Hyperbolic Surfaces of Finite Area}
\author{Tina Torkaman}
\date{\today}

\begin{document}
\date{}
\maketitle

\small{Published in \emph{International Mathematics Research Notices (IMRN)}.}
\bigskip

\begin{abstract}
Let $X$ be a complete hyperbolic surface of finite area. We establish that the intersection points of closed geodesics with length $\leq T$ are equidistributed on $X$ as $T \to \infty$.
\end{abstract}

\tableofcontents


\section{Introduction}
Consider a complete hyperbolic surface $X$ of finite area. Let $\mathcal{G}$ denote the set of all closed geodesics in $X$ and $\mathcal{G}_T$ the set of all closed geodesics of length $\leq T$. This paper establishes that the intersection points, including the self-intersections, of the closed geodesics in $\mathcal{G}_T$ are equidistributed with respect to the area measure on $X$ as $T \to \infty$. It is well known that closed geodesics are equidistributed in $T_1(X)$, the unit tangent bundle of $X$; see Theorem \ref{thm: Margulis.equi} or \cite[Thm. 9.1]{margulis.aspect}\cite{Bwn.equi}\cite{roblin}. We use the theory of geodesic currents to extend this to a similar result involving the intersection points of closed geodesics.

 For $\gamma_1,\gamma_2 \in \mathcal{G}$, define the intersection measure $I(\gamma_1,\gamma_2)$ on $X$ to be: 
 \begin{equation}
 I(\gamma_1,\gamma_2):=\sum \limits_{p\in \gamma_1 \cap \gamma_2} m_p\delta_p,
 \end{equation}
  where the sum is over all the transverse intersection points of $\gamma_1$ and $\gamma_2$, $\delta_p$ is the delta measure at the point $p$, and $m_p$ is the multiplicity at $p$. Specifically, $m_p=1$ unless the closed geodesics pass through the point $p$ multiple times (see \S \ref{sec: Pre}).
  The intersection number of $\gamma_1$ and $\gamma_2$, $i(\gamma_1,\gamma_2)$, is defined as the total volume $I(\gamma_1,\gamma_2)(X)$.
  Similarly, we can define intersection measure and intersection number of multi-geodesics $\sum \limits_i a_i\gamma_i$, where $a_i>0$ and $\gamma_i \in \mathcal{G}$. 
   
 Let $|\mu|$ denote the total volume of a measure $\mu$. Let $\alpha$ be the area measure on $X$ obtained from the hyperbolic metric. Then $\alpha(X)=area(X)=2\pi(2g-2+n)$ where $g$ is the genus of $X$ and $n$ the number of cusps. Define the multi-geodesic $\gamma_T$ to be: 
 \begin{equation}
 \gamma_T := \sum \limits_{\gamma \in \mathcal{G}_T} \gamma.
 \end{equation}

 The notation $C_c^*(X)$ denotes the dual space of the set of all compactly supported continuous functions on $X$.

 The main result is as follows:
 
 \begin{Thm}\label{theorem: main}
 Let $X$ be a complete hyperbolic surface of finite area. The intersection points of closed geodesics are equidistributed on $X$. In other words, we have
 
 \begin{equation}
 \frac{I(\gamma_T,\gamma_T)}{|I(\gamma_T,\gamma_T)|} \to \frac{\alpha}{area(X)},
 \end{equation}
 in $C_c^*(X)$ as $T \to \infty$.
 \end{Thm}
 
  The proof of Theorem \ref{theorem: main} also gives the growth rate of the intersection numbers between all the closed geodesics in $\mathcal{G}_T$, as $T \to \infty$. Let $\ell(\gamma)$ denote the length of $\gamma$.
  
  \begin{Cor}\label{Cor: main} As $T \to \infty$, we have
  \begin{equation}
  i(\gamma_T,\gamma_T) \sim \frac{1}{\pi^2(2g-2+n)}(\sum \limits_{\gamma \in \mathcal{G}_T} \ell(\gamma))^2 \sim \frac{e^{2T}}{\pi^2(2g-2+n)}.
  \end{equation}
  \end{Cor}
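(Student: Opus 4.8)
Write $L_T := \sum_{\gamma \in \mathcal{G}_T}\ell(\gamma)$ for the total length of the geodesics in $\mathcal{G}_T$, and let $\mathcal{L}_X$ denote the Liouville current of $X$, normalized so that $i(\mathcal{L}_X,\gamma)=\ell(\gamma)$ for every closed geodesic $\gamma$. The plan is to read the corollary off from the proof of Theorem~\ref{theorem: main}, whose heart is the convergence of the normalized intersection measures, together with the prime geodesic theorem. First I would record that the equidistribution of closed geodesics in $T_1(X)$ (Theorem~\ref{thm: Margulis.equi}) upgrades, in the space of geodesic currents on $X$, to
$$
\frac{\gamma_T}{L_T}\ \longrightarrow\ \frac{\mathcal{L}_X}{i(\mathcal{L}_X,\mathcal{L}_X)}\qquad(T\to\infty),
$$
where the constant is forced by pairing with $\mathcal{L}_X$ and using $i(\mathcal{L}_X,\gamma_T)=L_T$.

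Next I would invoke continuity of the intersection measure $I(\cdot,\cdot)$ along this sequence --- the analytic content of the proof of Theorem~\ref{theorem: main} --- to conclude
$$
\frac{I(\gamma_T,\gamma_T)}{L_T^{2}}\ \longrightarrow\ \frac{I(\mathcal{L}_X,\mathcal{L}_X)}{i(\mathcal{L}_X,\mathcal{L}_X)^{2}}
$$
in $C_c^{*}(X)$. By homogeneity of the hyperbolic plane the measure $I(\mathcal{L}_X,\mathcal{L}_X)$ is a constant multiple of $\alpha$, and since its total mass is $i(\mathcal{L}_X,\mathcal{L}_X)$ we get $I(\mathcal{L}_X,\mathcal{L}_X)=\big(i(\mathcal{L}_X,\mathcal{L}_X)/area(X)\big)\alpha$. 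A local computation on $T_1(X)$ --- two unit vectors at a common point making angle $\theta$ span geodesics that cross with weight $|\sin\theta|$, and $\int_0^{2\pi}|\sin\theta|\,d\theta=4$ fixes the normalization of $\mathcal{L}_X$ --- yields $i(\mathcal{L}_X,\mathcal{L}_X)=\tfrac{\pi}{2}\,area(X)=\pi^{2}(2g-2+n)$, the absence of a factor $\tfrac{1}{2}$ reflecting the ordered-pair convention built into the bilinear extension of $I$ (and the self-intersections of individual geodesics, being $O\big(\sum_{\gamma\in\mathcal G_T}\ell(\gamma)^2\big)=o(e^{2T})$, do not affect the leading term). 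Hence $I(\gamma_T,\gamma_T)/L_T^{2}\to\alpha/\big(\pi^{2}(2g-2+n)\,area(X)\big)$, and taking total masses gives $i(\gamma_T,\gamma_T)\sim L_T^{2}/\big(\pi^{2}(2g-2+n)\big)$, which is the first asymptotic equivalence.

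For the second equivalence I would use the prime geodesic theorem: if $N(x):=\#\{\gamma\in\mathcal G:\ell(\gamma)\le x\}$, then $N(x)\sim e^{x}/x$ (which also follows from Theorem~\ref{thm: Margulis.equi} and Margulis' orbit counting). Integrating by parts, $L_T=\int_0^{T}t\,dN(t)\sim e^{T}$, because $\int_0^{T}e^{t}/t\,dt=o(e^{T})$; thus $L_T^{2}\sim e^{2T}$ and the chain of equivalences closes.

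The step I expect to be the real obstacle is the continuity of the intersection pairing along $\gamma_T/L_T\to\mathcal{L}_X/i(\mathcal{L}_X,\mathcal{L}_X)$: on a non-compact finite-area surface, mass --- and intersection points --- can escape into the cusps, so one must show that the part of $I(\gamma_T,\gamma_T)$ supported deep in the cusps is $o(L_T^{2})$. This no-escape-of-mass estimate is exactly what the proof of Theorem~\ref{theorem: main} supplies, so here it may simply be quoted; the only additional care needed is the bookkeeping of the normalization constant $i(\mathcal{L}_X,\mathcal{L}_X)=\pi^{2}(2g-2+n)$, including the factor coming from counting ordered pairs of crossing geodesics.
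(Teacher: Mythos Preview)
Your proposal is correct and follows essentially the same route as the paper: combine Theorem~\ref{thm: Margulis.equi} with the continuity of $I(\cdot,\cdot)$ (Theorem~\ref{thm: cont.I(.,.)}) to obtain Theorem~\ref{thm: weaker.limit}, invoke the tightness established via Proposition~\ref{prop: num.int.cusp} to pass from $C_c^*$-convergence to convergence of total masses, and finish with $\ell(\gamma_T)\sim e^T$. Your identification of the no-escape-of-mass step as the genuine obstacle, and that it is supplied by the proof of Theorem~\ref{theorem: main}, is exactly right; the parenthetical about self-intersections and ordered pairs is superfluous (the bilinear definition of $I$ already handles this) but harmless.
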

  Here, $\sim$ indicates that their ratio tends to $1$ as $T \to \infty$.

\paragraph{\textbf{Idea of the proof}}
The proof has two main parts:

\begin{itemize}
    \item \emph{Part $1$:} We first obtain a result weaker than Theorem \ref{theorem: main}. More precisely, we show:
    
    \begin{Thm} \label{thm: weaker.limit}
    As $T \to \infty$, we have
    \begin{equation}
        \frac{I(\gamma_T,\gamma_T)}{\ell(\gamma_T)^2} \to \frac{\alpha}{2\pi^3(2g-2+n)^2},
    \end{equation}
    in $C_c^*(X)$.
    \end{Thm}
        
    Theorem \ref{thm: weaker.limit} implies Theorem \ref{theorem: main} when $X$ is compact. However, the measures in the converging sequence in Theorem \ref{thm: weaker.limit} are not necessarily probability measures, and when $X$ has a cusp, there might be escape of mass to infinity (into the cusps). Therefore, Theorem \ref{thm: weaker.limit} is not sufficient to imply Theorem \ref{theorem: main}. The second part of the proof addresses this issue. 
    
    We apply the theory of geodesic currents to prove Theorem \ref{thm: weaker.limit}. For more details on geodesic currents, see $\S \ref{sec: Pre}$ and \cite{Bon.gc.3}\cite{Bon.gc}. For the proof of Theorem \ref{thm: weaker.limit}, see \S \ref{sec: equid.}.
    
    \item \emph{Part $2$:} We establish the \emph{tightness} of the measures within the converging sequence described in Theorem \ref{thm: weaker.limit}, confirming that there is no escape of mass into the cusp(s). This result implies Theorem \ref{theorem: main}.

     To prove the tightness, we control the contribution to the intersection number coming from intersections in the cusp(s). It is important to note that intersection number behaves differently in the presence of cusps. That is, when $X$ is compact, we have $i(\alpha,\beta)\leq c(X)\ell(\alpha)\ell(\beta)$ for a constant $c(X)>0$. However, when $X$ has a cusp(s), the intersection number can grow exponentially with respect to length. See \cite{Basmj} \cite{Tina.sys}. 
    Thus, a small measure (of a geodesic current) inside a cuspidal neighborhood does not necessarily imply a small intersection number. See Example \ref{ex: not.cont.int}. 
    
    We prove an upper bound to the number of self-intersection points of $\gamma_T$ within a cuspidal neighborhood $Y$ by considering the geodesic arcs in $Y$ separately, based on their winding number around the cusp (see \S $4$ for the definition of winding number). More precisely, let $Y$ be the horoball neighborhood of a cusp with horocycle boundary $\partial Y$ of length $1$. An $n-$excursion (or simply $n-exc$) corresponding to $Y$ is defined as an arc within $Y$ whose endpoints lie on $\partial Y$ and whose winding number around the cusp is $n$. Let $E_n(\gamma)$ denote the number of all $n-exc$'s of $\gamma \in \mathcal{G}$. Similarly, for a multi-geodesic $\gamma'=\sum \limits_{i=1}^{m} \gamma_i$, we define $E_n(\gamma'):=\sum \limits_{i=1}^{m} E_n(\gamma_i)$. The main result in the second part is:

\begin{Prop} \label{prop: n.exc}
 For $n,T>0$ we have:
 \begin{equation}
\frac{E_n(\gamma_T)}{\ell(\gamma_T)}= \frac{\sum \limits_{\gamma \in \mathcal{G}_T} E_n(\gamma)}{\sum \limits_{\gamma \in \mathcal{G}_T}\ell(\gamma)} \leq \frac{c}{n^2}, 
 \end{equation}
 for a constant $c>0$ independent of all variables and geometric data involved.
\end{Prop}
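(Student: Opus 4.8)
The plan is to bound the number of $n$-excursions of the full multi-curve $\gamma_T$ by relating excursions to the geodesic flow on $T_1(X)$ and then applying the equidistribution of closed geodesics (Theorem~\ref{thm: Margulis.equi}). The key geometric observation is that an $n$-excursion corresponding to the horoball neighborhood $Y$ of length-$1$ boundary horocycle is a very constrained object: in the upper half-plane model with the cusp at $\infty$ and $Y = \{\operatorname{Im} z \geq 1\}$, a geodesic arc that enters $Y$, winds $n$ times around the cusp, and exits must be (a lift of) an arc whose endpoints on $\mathbb{R}$ are roughly $n$ apart, and the portion of the arc spent deep in $Y$ (above height $\sim n$) has length $\sim 2\log n$. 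Thus an $n$-excursion forces the geodesic, at the moment of entering $Y$, to point almost directly ``up'' toward the cusp with an angle whose deviation from vertical is $O(1/n)$. Concretely, I would fix a cross-section $\Sigma \subset T_1(X)$ given by the inward-pointing vectors based on $\partial Y$, and identify the set $\Sigma_n \subset \Sigma$ of vectors whose forward geodesic is an $n$-excursion; a direct computation in coordinates shows $\mu(\Sigma_n) \leq c'/n^2$ for a suitable transverse measure $\mu$ induced by Liouville measure, because the condition ``winding number exactly $n$'' cuts out an annulus of the appropriate size in the space of geodesics hitting $\partial Y$.

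Next I would count: $E_n(\gamma)$ equals the number of times the closed geodesic $\gamma$, traversed once, crosses $\Sigma$ into an $n$-excursion; equivalently it is (a constant times) the number of intersections of the $\gamma$-orbit with $\Sigma_n$. Summing over $\gamma \in \mathcal{G}_T$ and using that $\sum_{\gamma \in \mathcal{G}_T} \ell(\gamma)$ is, up to normalization, the total mass that the closed-geodesic measures assign to $T_1(X)$, the ratio $E_n(\gamma_T)/\ell(\gamma_T)$ converges (or is bounded above, which is all we need) by the Liouville measure of a flow-box neighborhood of $\Sigma_n$, i.e.\ by $\mu(\Sigma_n)$ times the return-time normalization. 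The equidistribution theorem for closed geodesics in $T_1(X)$ (with respect to Liouville measure) is exactly what licenses replacing the combinatorial count $\sum_\gamma E_n(\gamma)$ by the measure-theoretic quantity; one must be slightly careful because $\Sigma_n$ is not open, but since we only want an upper bound, enlarging $\Sigma_n$ slightly to an open set with comparable measure and using the portmanteau/weak-$*$ convergence suffices. Putting $\mu(\Sigma_n) \leq c'/n^2$ into this yields $E_n(\gamma_T)/\ell(\gamma_T) \leq c/n^2$, uniformly in $T$ (for $T$ large; small $T$ contributes only finitely many geodesics and can be absorbed into the constant, or handled by noting $E_n(\gamma) \leq \ell(\gamma)$ trivially for bounded length).

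I would organize the argument as three lemmas: (1) the \emph{geometry of excursions} lemma, giving the $O(1/n)$ angle constraint and the identification of $\Sigma_n$ as an explicit region in the space of geodesics crossing $\partial Y$; (2) the \emph{measure estimate} $\mu(\Sigma_n) = O(1/n^2)$, which is the routine hyperbolic-trigonometry computation in the half-plane model; and (3) the \emph{transfer to closed geodesics} via Theorem~\ref{thm: Margulis.equi}, converting the uniform bound on $\mu(\Sigma_n)$ into the stated bound on $E_n(\gamma_T)/\ell(\gamma_T)$.

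The main obstacle I anticipate is step (3): making the equidistribution argument genuinely \emph{uniform in $n$}. For each fixed $n$, equidistribution gives $E_n(\gamma_T)/\ell(\gamma_T) \to \mu(\Sigma_n)$ as $T \to \infty$, but Proposition~\ref{prop: n.exc} asserts a bound valid for \emph{all} $n$ and $T$ simultaneously, so one cannot simply take limits. I would handle this by proving a non-asymptotic estimate: either a direct combinatorial/geometric bound on $E_n(\gamma)$ for a single geodesic of length $\ell$ (something like $E_n(\gamma) \leq C \ell / n^2 + C'$, where the additive term reflects that $\gamma$ can have at most $O(1)$ excursions that are ``accidentally'' long), summed over $\mathcal{G}_T$; or an effective version of the equidistribution statement with an error term that is controlled uniformly once $\mu(\Sigma_n)$ is small. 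The cleanest route is probably the former: show that between two consecutive $n$-excursions of a fixed geodesic the geodesic must travel a definite length $\gtrsim n^2$ (equivalently, escape $Y$ and the angle condition forces a long return time before the vector can again be within $O(1/n)$ of vertical at $\partial Y$), which immediately gives $E_n(\gamma) \leq \ell(\gamma)/(cn^2) + 1$ per geodesic and hence the proposition after summation, with the ``$+1$'' terms absorbed since $|\mathcal{G}_T| = o(\ell(\gamma_T))$ is false — so instead one notes the ``$+1$'' only occurs for geodesics that actually enter $Y$ with the right angle at all, and such geodesics have length $\gtrsim \log n$, keeping the bound of the desired shape. I expect this per-geodesic return-time estimate to be the technical heart of the proof.
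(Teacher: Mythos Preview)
Your cross-section picture and the measure estimate $\mu(\Sigma_n)=O(1/n^2)$ are correct, and you correctly isolate the real difficulty: uniformity in $n$. However, the fix you propose in the last paragraph does not work. The claim that between two consecutive $n$-excursions a geodesic must travel length $\gtrsim n^2$ is false. On a once-punctured torus, take the conjugacy class of $p^n a p^n$ with $p$ parabolic and $a$ a short hyperbolic element; its geodesic representative has two $n$-excursions and total length $\approx 4\log n + O(1)$, so consecutive $n$-excursions are separated by $O(1)$ outside the cusp. More generally, for a single geodesic one only has $E_n(\gamma)\cdot 2\log n \lesssim \ell(\gamma)$ (each $n$-excursion has length $\gtrsim 2\log n$), which gives $E_n(\gamma)/\ell(\gamma)=O(1/\log n)$, far from $O(1/n^2)$. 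So no per-geodesic argument of this type can give the proposition; the bound is genuinely a statistical one over $\mathcal{G}_T$, and your equidistribution route, without an effective error term uniform in $n$, cannot close the gap. (Incidentally, $|\mathcal{G}_T|=o(\ell(\gamma_T))$ \emph{is} true, since $|\mathcal{G}_T|\sim e^T/T$ while $\ell(\gamma_T)\sim e^T$; but this is irrelevant once the return-time claim fails.)

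The paper's argument is completely different and avoids equidistribution altogether. It exploits the exponential growth of the \emph{number} of closed geodesics rather than a measure-theoretic limit. Fix a basepoint $Q\in\partial Y$. To each pair $(\gamma,\eta_n)$ with $\gamma\in\mathcal{G}_T$ and $\eta_n$ an $n$-excursion of $\gamma$, one assigns the homotopy class of the loop at $Q$ obtained by replacing $\eta_n$ with a single turn around the cusp and sliding the endpoints to $Q$. Since $\ell(\eta_n)\geq 2\log n + c$, this new loop has length $\leq T - 2\log n + c'$, and the assignment is injective. But the number of geodesic loops at $Q$ of length $\leq L$ is $\leq Ce^L$ by elementary orbit counting in $\mathbb{H}$. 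Hence $E_n(\gamma_T)\leq C e^{T-2\log n + c'} = C' e^T/n^2$, and combining with $\ell(\gamma_T)\sim e^T$ gives the proposition, uniformly in $n$ and $T$. The key point you were missing is that the $1/n^2$ does not come from a small cross-section, but from the fact that cutting out an $n$-excursion shortens a geodesic by $2\log n$, which in an exponentially growing family costs a factor $e^{2\log n}=n^2$ in the count.
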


\end{itemize}

 \paragraph{Remarks}
 \begin{itemize}
 \item Lalley proved an almost equidistribution result for intersection points on compact hyperbolic surfaces \cite{lalleyEqui}. This follows from part one of the proof of Theorem \ref{theorem: main}. Our approach to the result differs from \cite{lalleyEqui} and uses the theory of geodesic currents. 
 
 \item A. Katz established an effective equidistribution result for the intersection point of closed geodesics on compact negatively curved surfaces (work in progress).
 
     \item We also prove an equidistribution result involving the directions of the geodesics at the intersection points. In other words, let $\mathcal{I}_1(X)$ be the set of all triples $(p, v_1, v_2)$, where $v_1,v_2$ are two non-parallel tangent lines at $P$. Then we prove an equidistribution result in $\mathcal{I}_1(X)$; see Corollary \ref{cor: equi.general}.
 
 \item For a finite arc $\gamma$, the first part of the proof of Theorem \ref{theorem: main} implies that the intersection points between $\gamma$ and all closed geodesics are equidistributed in $\gamma$ with respect to the length measure on $
 \gamma$ as $T \to \infty$. See $\S \ref{sec: geod.arc}$.  
 
 \end{itemize}

\paragraph{Acknowledgments}
I would like to thank C. McMullen for his continuous support and constructive suggestions, and A. Eskin, A. Wilkinson, R. Saavedra, and Y. Zhang for their helpful comments. Finally, I am grateful to the anonymous referee for a careful reading of the manuscript and for valuable suggestions.

\section{Geodesic currents}\label{sec: Pre}

This section presents some basic concepts and results regarding geodesic currents and their extensions to finite-area surfaces. For reference, see \cite{Bon.gc.3} \cite{Bon.gc}.

Let $X:=\mathbb{H}/\Gamma$ be a hyperbolic surface of finite area without boundary where $\Gamma$ is the fundamental group and $\mathbb{H}$ the upper half-plane. Each geodesic in $\mathbb{H}$ can be represented by its two boundary points at $\infty$; therefore, the space of all unoriented geodesics is homeomorphic to $G(\mathbb{H}):=(S^1 \times S^1 \backslash \Delta)/ (\mathbb{Z}/2)$. Let $\mathbb{P}(X)$ be the tangent line bundle of $X$ and $T_1(X)$ the unit tangent bundle of $X$. Geodesic flow $\phi$ gives a foliation $\mathcal{F}$ of $\mathbb{P}(X)$.

A \emph{geodesic current} can be defined and interpreted in various ways:

\begin{enumerate}
\item It is a positive $\Gamma-$invariant measure on $G(\mathbb{H})$. 
\item It is a positive $\phi-$invariant and involution invariant measure on $T_1(X)$. The involution map sends $v \in T_1(X)$ to $-v$. 

\item It is a positive transverse measure of foliation $\mathcal{F}$. In other words, it assigns a measure to each plane $V\subset \mathbb{P}(X)$ transverse to the leaves of $\mathcal{F}$ such that these measures are invariant when $V$ moves along the leaves of $\mathcal{F}$ (invariant under holonomy). See \cite[section 4.1]{Bon.gc.3}. 
\end{enumerate}
We may not mention which interpretation we use when it is clear. 

The third definition implies that geodesic currents are topological. This means that there is a canonical correspondence between the geodesic currents of two marked hyperbolic surfaces in $\mathcal{T}_g$, the Teichm\"uller space of marked complete hyperbolic surfaces with genus $g$. This correspondence sends homotopic closed geodesics to each other.

 The first and third definitions are equivalent because for each transverse plane to $\mathcal{F}$, we can consider the set of all geodesics transverse to the plane. This maps a measure on $G(\mathbb{H})$ to a transverse measure on $\mathcal{F}$. To see their connection with the second definition, note that a measure on $T_1(X)$ that satisfies the assumptions in $(\romannumeral 2)$ can be written locally as $C\times d\ell/2$ where $d\ell$ is the length measure along the geodesics and $C$ can be interpreted as a transverse measure of $\mathcal{F}$. On the other hand, the product of a transverse measure of $\mathcal{F}$ with $d\ell/2$ gives a measure on $T_1(X)$ which is $\phi-$invariant. We include the factor $1/2$ because the geodesics in $G$ and $\mathcal{F}$ are considered unoriented, whereas the geodesics in $T_1(X)$ are oriented, which would otherwise lead to double counting.
 
  Denote $C_c^*(M)$ as the dual of the set of compactly supported continuous functions on a topological space $M$.
 Let $\mathcal{C}$ be the space of all geodesic currents. Consider $\mathcal{C}$ equipped with the weak topology. That is, a sequence $C_n$ of geodesic currents converges to $C$ if 
  \begin{equation}
  \lim \limits_{n \to \infty} \int_{T_1(X)} f \, dC_n \to \int_{T_1(X)} f \, dC ,
  \end{equation}
  for every $f \in C_c^*(T_1(X))$.
 
 Closed geodesics are examples of geodesic currents; consider the half of the length measure on the unit tangent vectors to a closed geodesic. This is a $\phi$ and involution invariant measure on $T_1(X)$. Moreover, the set of weighted closed geodesics is dense in $\mathcal{C}$. The length of closed geodesics and their intersection number can be extended to geodesic currents. When $X$ is compact, we can define the length and intersection number as the continuous extension of these functions for closed geodesics. But when $X$ has cusps, the intersection number $i(.,.)$ is no longer continuous. So we define them explicitly as follows.   
 
 The \emph{length} of a geodesic current $C$ is defined as the total volume of $T_1(X)$ with respect to $C$, denoted by $\ell_X(C)$ (or simply $\ell(C)$). In this paper, we restrict $\mathcal{C}$ to geodesic currents of finite length; whenever we consider a geodesic current, we assume that it has a finite length.

Let $\mathbb{P}(X) \oplus \mathbb{P}(X)$ be the Whitney sum of the bundle $\mathbb{P}(X) \to X$, which is the space of triples $(x, \lambda_1, \lambda_2)$ where $x \in X$ and $\lambda_1, \lambda_2$ are tangent lines at $x$. 
Assume that $P_1$ and $P_2$ are projections of $\mathbb{P}(X) \oplus \mathbb{P}(X)$ to $\mathbb{P}(X)$ by forgetting the second and the third components, respectively.
Let $\mathcal{F}_i$ be the foliation on $\mathbb{P}(X) \oplus \mathbb{P}(X)$ where its leaves are the preimage of the leaves of $\mathcal{F}$ under the map $P_i$, for $i=1,2$. The space $\mathbb{P}(X) \oplus \mathbb{P}(X)$ is a $4$ dimensional manifold foliated by $\mathcal{F}_1$ and $\mathcal{F}_2$ which are transverse outside the diagonal $\triangle$. The diagonal is the set of all triples $(x,\lambda, \lambda)$. Define $\mathcal{I}_1(X):=\mathbb{P}(X) \oplus \mathbb{P}(X)\backslash \triangle$.

Every two geodesic currents $C_1$ and $C_2$ define a measure $I(C_1,C_2)$ on $\mathcal{I}_1(X)$ as follows.  
 The geodesic current $C_1$ is a transverse measure of $\mathcal{F}$ on $\mathbb{P}(X)$. Therefore, $P_1^*(C_1)$ defines a measure on each plane in $\mathcal{I}_1(X)$ transverse to $\mathcal{F}_1$. We know $\mathcal{F}_1$ and $\mathcal{F}_2$ are transverse in $\mathcal{I}_1(X)$. Therefore, $P_1^*(C_1)$ induces a measure on each leaf of $\mathcal{F}_2$. Similarly,  $P_2^*(C_2)$ induces a measure on each leaf of $\mathcal{F}_1$. Define $I(C_1,C_2)$ as the product measure $P_1^*(C_1) \times P_2^*(C_2)$. The \emph{Intersection number} $i(C_1,C_2)$ of $C_1$ and $C_2$ is the total volume of $\mathcal{I}_1(X)$ with respect to $I(C_1,C_2)$. 

Given closed geodesics $\gamma_1, \gamma_2$, we can see that $i(\gamma_1,\gamma_2)$ (as defined above) is equal to the total number of transverse intersection points between $\gamma_1$ and $\gamma_2$ counted with multiplicity. By \emph{multiplicity}, we mean that a point is counted with weight if the closed geodesics pass through it multiple times. To find the weight at a point $p$, we can homotopically move the curves in a neighborhood of $p$ to have only simple intersections. Then, the minimum number of simple intersections that can be obtained is the weight. We can see that the intersection number is a topological invariant. In other words, for closed curves $\alpha$ and $\beta$, $i(\alpha,\beta)$ is the minimal number of transverse intersection points between the representatives in the homotopy classes $[\alpha], [\beta]$ (the points are counted with multiplicity). Geodesic representatives always attain the minimum.

There is a canonical geodesic current $L_X$, called $\emph{Liouville current}$, which is equal to $L/\Gamma$; $\Gamma$ is the fundamental group and $L$ is a measure on $G(\mathbb{H})$ defined as follows. Consider the intervals $[a,b],[c,d]$ on $S^1$ that present the points at $\infty$ of the upper half-plane model. Then $L-$measure of the set of geodesics with one endpoint in $[a,b]$ and the other endpoint in $[c,d]$ is:
\begin{equation}
L([a,b]\times [c,d])=\bigg |  \log \bigg | \frac{(a-c)(b-d)}{(a-d)(b-c)} \bigg | \bigg |.
\end{equation}

We can see that $L$ is invariant under $Isom^+(\mathbb{H})=SL(2,\mathbb{R})$. 

Consider a geodesic $h$ in $\mathbb{H}$ and a point $P$ on it. These give local coordinates $(\theta,x)$ to the set of geodesics that intersect $h$, where $\theta$ is the angle a geodesic makes with $h$ and $x$ is the coordinate of its intersection with $h$ (when $P$ is considered as the origin).
We can see that $ L_X=1/2\sin(\theta)d\theta dx$ \cite{Bon.gc}.  

For any geodesic current $C$, we have $i(C, L_X)= \ell_X(C)$; in particular, we have $i(L_X, L_X)=\ell_X(L_X)=\pi^2(2g-2+n)$. Bonahon proved this for compact surfaces \cite[Prop 15]{Bon.gc}. The same proofs work for complete finite-area surfaces. We repeat the proofs below.

An $H$-box in $\mathbb{H}$ is defined as the subset of $\mathbb{P}(\mathbb{H})$ consisting of all tangent lines to geodesic arcs that connect two fixed geodesic arcs (called the edges of $H$) in $\mathbb{H}$. An $H$-box in $X$ is the projection of such an $H$-box in $\mathbb{H}$ to $X$. See Figure \ref{fig: Box}.

Given an $H$-box $b$, consider the transversal plane in $\mathbb{P}(X)$ consisting of all tangent lines in $b$ based at one of its boundary edges. Any geodesic current $C$ induces a measure on this transversal plane via its transverse measure. We define the measure of $b$ with respect to $C$ as the total mass assigned to this transversal plane.

\begin{figure}[h]

    \centering

\tikzset{every picture/.style={line width=0.75pt}} 

\begin{tikzpicture}[x=0.55pt,y=0.65pt,yscale=-1,xscale=1]

\draw    (125,87) .. controls (165,57) and (185,117) .. (225,87) ;
\draw    (197.2,145.2) .. controls (110.2,183.2) and (91.2,104.2) .. (125,87) ;
\draw    (225,87) .. controls (313.2,33.2) and (356.2,151.2) .. (262.2,156.2) ;
\draw    (197.2,145.2) .. controls (230.2,131.2) and (242.2,156.2) .. (262.2,156.2) ;
\draw    (132.2,120.2) .. controls (154.2,132.2) and (168.2,132.2) .. (190.2,120.2) ;
\draw    (237.2,111.2) .. controls (262.2,125.2) and (275.2,118.2) .. (289,107) ;
\draw    (142,124) .. controls (156.2,116.2) and (164.2,116.2) .. (180.2,126.2) ;
\draw    (250,114) .. controls (258.2,102.2) and (271.2,103.2) .. (283.2,112.2) ;
\draw   (344.95,112.25) -- (362.98,112.25) -- (362.98,105) -- (375,119.5) -- (362.98,134) -- (362.98,126.75) -- (344.95,126.75) -- cycle ;
\draw   (396.66,123.04) -- (399.78,81.13) -- (439.49,99.96) -- (436.36,141.87) -- cycle ;
\draw   (451.42,84.13) -- (453.52,35.17) -- (492.58,57.87) -- (490.48,106.83) -- cycle ;
\draw  [dash pattern={on 0.84pt off 2.51pt}]  (399.78,81.13) -- (453.52,35.17) ;
\draw  [dash pattern={on 0.84pt off 2.51pt}]  (411,101) -- (462.15,54) ;
\draw  [dash pattern={on 0.84pt off 2.51pt}]  (439.49,99.96) -- (492.58,57.87) ;
\draw  [dash pattern={on 0.84pt off 2.51pt}]  (396.66,123.04) -- (451.42,84.13) ;
\draw  [dash pattern={on 0.84pt off 2.51pt}]  (436.36,141.87) -- (490.48,106.83) ;
\draw  [dash pattern={on 0.84pt off 2.51pt}]  (430.15,120) -- (477.15,85) ;
\draw [color={rgb, 255:red, 74; green, 144; blue, 226 }  ,draw opacity=1 ]   (186.42,107.17) .. controls (195.14,112.83) and (204.14,122.83) .. (204.42,135.17) ;
\draw [color={rgb, 255:red, 74; green, 144; blue, 226 }  ,draw opacity=1 ]   (219.42,96.17) .. controls (222.14,109.83) and (229.14,117.83) .. (237.42,124.17) ;
\draw [color={rgb, 255:red, 74; green, 144; blue, 226 }  ,draw opacity=1 ]   (202.11,125.7) -- (219.81,105.21) ;
\draw [shift={(221.11,103.7)}, rotate = 130.82] [color={rgb, 255:red, 74; green, 144; blue, 226 }  ,draw opacity=1 ][line width=0.75]    (10.93,-3.29) .. controls (6.95,-1.4) and (3.31,-0.3) .. (0,0) .. controls (3.31,0.3) and (6.95,1.4) .. (10.93,3.29)   ;
\draw [color={rgb, 255:red, 74; green, 144; blue, 226 }  ,draw opacity=1 ]   (424.2,125.2) -- (463.13,63.89) ;
\draw [shift={(464.2,62.2)}, rotate = 122.41] [color={rgb, 255:red, 74; green, 144; blue, 226 }  ,draw opacity=1 ][line width=0.75]    (10.93,-3.29) .. controls (6.95,-1.4) and (3.31,-0.3) .. (0,0) .. controls (3.31,0.3) and (6.95,1.4) .. (10.93,3.29)   ;
\draw  [color={rgb, 255:red, 74; green, 144; blue, 226 }  ,draw opacity=1 ] (204.01,118.31) .. controls (206.64,118.28) and (208.79,117.73) .. (210.46,116.67) .. controls (209.27,118.25) and (208.56,120.36) .. (208.33,122.97) ;
\draw  [color={rgb, 255:red, 74; green, 144; blue, 226 }  ,draw opacity=1 ] (438.37,95.4) .. controls (442.2,93.84) and (445.17,91.77) .. (447.28,89.18) .. controls (446.03,92.28) and (445.64,95.88) .. (446.12,99.99) ;

\draw (403,162.4) node [anchor=north west][inner sep=0.75pt]    {$T_{1}( X)$};
\draw (207,163.4) node [anchor=north west][inner sep=0.75pt]    {$X$};
\draw (181,124.4) node [anchor=north west][inner sep=0.75pt]  [font=\small]  {$\alpha _{1}$};
\draw (234,125.4) node [anchor=north west][inner sep=0.75pt]  [font=\small]  {$\alpha _{2}$};
\draw (470,74.4) node [anchor=north west][inner sep=0.75pt]    {$\alpha _{2}$};
\draw (401.78,93.53) node [anchor=north west][inner sep=0.75pt]    {$\alpha _{1}$};

\end{tikzpicture}

    \caption{An $H-$Box}
    \label{fig: Box}
   
\end{figure}

\begin{Prop}\label{prop: length.LX}
Let $X$ be a hyperbolic surface with finite area. We have $i(C,L_X)=\ell_X(C)$ for any geodesic current $C$.
\end{Prop}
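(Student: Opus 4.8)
The plan is to reduce the identity to a local computation inside a single $H$-box, using the definition of the intersection measure $I(C,L_X)$ as a product of transverse measures and the explicit formula $L_X = \tfrac12 \sin\theta\, d\theta\, dx$ in the coordinates $(\theta, x)$ attached to a geodesic $h$ and a basepoint $P$ on it. The key observation is that computing $i(C, L_X)$ amounts to integrating, against $C$, over the space of geodesics, the $L_X$-mass of all geodesics crossing a given one; and this inner integral, when taken along a unit-length segment of a leaf of $\mathcal{F}$, should come out to exactly $1$, so that the outer integration against $C$ returns $\ell_X(C)$, the $C$-mass of $T_1(X)$.

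First I would cover $\mathbb{P}(X)$ (or a full-measure subset, away from a measure-zero exceptional set) by countably many small $H$-boxes $B$, chosen small enough that any two geodesic arcs crossing $B$ meet at most once, and fix a partition-of-unity-type decomposition of the intersection integral over these boxes. Inside one box $B$, pick a transverse plane $V$ carrying $P_1^*(C)$, parametrized along an edge arc $h$ by arclength $x$; for each such geodesic $g_x$ (with tangent line $\lambda_1$), the fiber of $\mathcal{F}_2$ consists of the geodesics transverse to $g_x$ through points of $g_x \cap B$, and $P_2^*(L_X)$ restricted to this fiber is, by the coordinate formula, $\tfrac12\sin\theta\, d\theta\, dx'$ where $x'$ runs along $g_x$ and $\theta$ is the crossing angle. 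Integrating $\tfrac12 \sin\theta$ over $\theta \in (0,\pi)$ gives $1$, so the $I(C,L_X)$-mass of the part of $\mathcal{I}_1(X)$ lying over $B$ equals $\int (\text{length of } g_x \cap B)\, dP_1^*(C)(x)$, which by the product description $C = P_1^*(C) \times (d\ell/2)$ of the Liouville-type factorization of a current on $T_1(X)$ — wait, more precisely: it equals the $C$-mass (as a measure on $T_1(X)$, via interpretation (ii)) of the set of unit tangent vectors lying over the portion of $B$ swept out, up to the factor $\tfrac12$ accounting for the unoriented-vs-oriented convention.

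Summing over the box cover and checking that the pieces fit together without overcounting (here is where the "meet at most once" smallness of the boxes is used, and where one must be careful that the correspondence between "crossing a given geodesic" and "lying over a point of $X$" is a genuine bijection up to a null set) yields $i(C, L_X) = \tfrac12 \cdot (\text{$C$-mass of $T_1(X)$ counted with both orientations}) = \ell_X(C)$, since $\ell_X(C)$ is by definition the total $C$-mass of $T_1(X)$ and the factor $\tfrac12$ is exactly the one built into the identification of currents with $\phi$- and involution-invariant measures. The specialization $i(L_X, L_X) = \ell_X(L_X) = \pi^2(2g-2+n)$ then follows by computing $\ell_X(L_X)$ directly: integrate $\tfrac12\sin\theta\, d\theta\, dx$ over $\theta \in (0,\pi)$ and $x$ over a fundamental domain, giving $\mathrm{area}(X)/2 \cdot \pi/\ldots$ — I would just cite the normalization $\ell_X(L_X) = \pi^2(2g-2+n)$ as a direct integral over $T_1(X)$ of the Liouville density against arclength.

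The main obstacle is the bookkeeping near the boundary between $H$-boxes and, in the finite-area (cusped) setting, making sure the box cover can be arranged so that no mass escapes into the cusp — i.e.\ that the countable cover genuinely exhausts a full-$L_X$-measure and full-$C$-measure subset of $\mathbb{P}(X)$, and that Fubini/Tonelli applies to justify interchanging the $\theta$-integration with the integration against $C$. Since $C$ and $L_X$ are both genuine (positive, $\sigma$-finite) measures and $C$ has finite total length by standing assumption, Tonelli handles the interchange; the cusp causes no new difficulty at this stage because the identity is an equality of (possibly infinite, here finite) non-negative quantities and requires no uniform control — that is exactly why Bonahon's compact-surface proof carries over verbatim, as the statement claims.
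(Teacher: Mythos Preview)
Your proposal is correct and follows essentially the same route as the paper: both arguments localize to an $H$-box, use the explicit density $L_X = \tfrac12\sin\theta\,d\theta\,dx$, integrate out the angular variable to obtain the arclength measure along the leaves, and then recognize the resulting measure on $\mathbb{P}(X)$ as the product $C \times d\ell_X$ whose total mass is $\ell_X(C)$. The paper's write-up is more streamlined---it first isolates the lemma that the Liouville mass of geodesics crossing an arc equals its length, then applies it inside a single $H$-box without dwelling on the cover, the Tonelli justification, or the orientation factor---but the mathematical content is the same; your extra bookkeeping worries (box boundaries, cusp exhaustion, the $\tfrac12$) are all harmless here, as you yourself note at the end.
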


\begin{proof}
     We show that the Liouville measure (the measure corresponding to Liouville current) of the set of geodesics intersecting a geodesic arc $\alpha$ is equal to the length of $\alpha$. Consider the local coordinates $(x,\theta)$ defined from a fixed geodesic $\gamma_0$ with a fixed point $P_0$ on $\gamma_0$. As mentioned earlier, we have $L_X=1/2 \sin \theta d\theta dx$ in these local coordinates. Therefore, the Liouville measure of the set of all geodesics intersecting an arc $\alpha$ is equal to
     \begin{equation}
     \int \limits_{\alpha} \int \limits_{0}^{\pi} \frac{1}{2}\sin{\theta}d\theta dx= \int \limits_{\alpha} dx,
     \end{equation}
     
     which is the length of $\alpha$. 
     
      Now consider an $H-$box $B$ in $\mathbb{P}(X)$. Let $D^2$ denote the closed unit disk in $\mathbb{R}^2$. Then we have $B \cong D^2 \times [0,1]$(homeomorphic) and for any point $p \in D^2$, the set $p \times [0,1]$ corresponds to a geodesic arc between the edges of $B$. If we integrate the measure $I(C,L_X)$ along the third component $\lambda_2$ in $\mathcal{I}_1(X)$ then we obtain a measure $\mu$ on $\mathbb{P}(X)$. The measure $\mu$ on $B$ is the product measure $C\times d\ell_X$, since integrating the Liouville measure gives the length measure along the geodesic arcs $p \times [0,1]$ and the induced measure on $D^2$ is $C$. As a result, $i(C,L_X)$ which is the total volume of $\mathbb{P}(X)$ with respect to $\mu$ is equal to $\ell_X(C)$.
\end{proof}
    
For any two geodesic currents $C_1, C_2$, the push forward of the measure $I(C_1, C_2)$ by the projection $P: \mathcal{I}_1(X) \to X$ which sends $(x, \lambda_1, \lambda_2)$ to $x$,  gives a measure $P_*I(C_1,C_2)$ on $X$. We may also refer to this measure as $I(C_1,C_2)$.

\begin{Lemma}\label{lemma: push c.g}
Given closed geodesics $\gamma_1$ and $\gamma_2$, then $P_*I(\gamma_1, \gamma_2)$ is a measure supported on the intersection points of $\gamma_1$ and $\gamma_2$ and we have:
\begin{equation}
P_*I(\gamma_1, \gamma_2)= \sum_{p\in \gamma_1\cap \gamma_2} m_p \delta_p,
\end{equation}
where the sum is over the transverse intersection points of $\gamma_1$ and $\gamma_2$ and $m_p$ is the multiplicity at the intersection point $p$.
\end{Lemma}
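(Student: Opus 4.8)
I would prove this by computing $I(\gamma_1,\gamma_2)$ near each of its atoms, using that it is the product transverse measure $P_1^*(\gamma_1)\times P_2^*(\gamma_2)$ and that a closed geodesic $\gamma_i$, viewed as a current, is carried by the single compact leaf $\widehat{\gamma_i}\subset\mathbb{P}(X)$ of $\mathcal{F}$ equal to its projective tangent lift $\{(x,T_x\gamma_i):x\in\gamma_i\}$. So the plan has three steps: locate the support, compute the mass of each atom by a local flow‑box argument parallel to the proof of Proposition~\ref{prop: length.LX}, and identify that mass with the multiplicity $m_p$.

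\emph{Step 1 (support).} Since $P_i^*(\gamma_i)$ is carried by $P_i^{-1}(\widehat{\gamma_i})$, the product $I(\gamma_1,\gamma_2)$ is carried by $P_1^{-1}(\widehat{\gamma_1})\cap P_2^{-1}(\widehat{\gamma_2})$, i.e. by the triples $(x,\lambda_1,\lambda_2)\in\mathcal{I}_1(X)$ with $x\in\gamma_1\cap\gamma_2$, with $\lambda_i$ tangent to a local branch of $\gamma_i$ through $x$, and with $\lambda_1\neq\lambda_2$. Two distinct complete geodesics of $\mathbb{H}$, and hence two distinct local branches at a point, meet in at most one point and transversally, so the image of this carrier under $P\colon(x,\lambda_1,\lambda_2)\mapsto x$ is exactly the (finite) set of transverse intersection points of $\gamma_1$ and $\gamma_2$. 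Thus $P_*I(\gamma_1,\gamma_2)=\sum_p c_p\,\delta_p$, the sum over these points, with $c_p\ge 0$; it remains to show $c_p=m_p$.

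\emph{Step 2 (local mass).} Fix a transverse intersection point $p$ and a small geodesically convex disk $D\ni p$ so small that inside $D$ the only crossing among the strands of $\gamma_1$ and $\gamma_2$ is at $p$, and so small that the strands of $\gamma_i$ in $D$ are precisely the branches $u^{(i)}_1,\dots,u^{(i)}_{k_i}$ of $\gamma_i$ through $p$. Over the fiber of $p$ the carrier of $I(\gamma_1,\gamma_2)$ is the set of points $q_{a,b}=(p,\,T_p u^{(1)}_a,\,T_p u^{(2)}_b)$ with $T_p u^{(1)}_a\neq T_p u^{(2)}_b$. Near each such $q_{a,b}$ I would run the same computation as in Proposition~\ref{prop: length.LX}: the small piece of the $\mathcal{F}_2$‑leaf through $q_{a,b}$ has base point running along the branch $u^{(2)}_b\subset\gamma_2$, so it meets $P_1^{-1}(\widehat{\gamma_1})$ only where that base point lies on $\gamma_1$, hence — since $\gamma_1\cap\gamma_2\cap D=\{p\}$ — transversally in the single point $q_{a,b}$; therefore $P_1^*(\gamma_1)$ induces the unit Dirac mass at $q_{a,b}$ on that $\mathcal{F}_2$‑leaf, and symmetrically $P_2^*(\gamma_2)$ on the $\mathcal{F}_1$‑leaf through $q_{a,b}$. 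Forming the product in a flow box for the transverse pair $(\mathcal{F}_1,\mathcal{F}_2)$ then shows that $I(\gamma_1,\gamma_2)$ is the unit Dirac mass at $q_{a,b}$, whence $c_p=\#\{(a,b):T_p u^{(1)}_a\neq T_p u^{(2)}_b\}$.

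\emph{Step 3 (identification with $m_p$).} When $\gamma_1\neq\gamma_2$ distinct closed geodesics share no arc, so no branch of $\gamma_1$ is a branch of $\gamma_2$ and $c_p=k_1 k_2$ is the product of the numbers of passages of $\gamma_1$ and of $\gamma_2$ through $p$; when $\gamma_1=\gamma_2$ one gets $c_p=k_1(k_1-1)$. In either case this is exactly the multiplicity $m_p$: any two distinct branches through $p$ have linked (interleaved) endpoints on $\partial D$, since each chord runs through the interior point $p$, so any homotopy supported near $p$ must produce at least one crossing for each such ordered pair of branches, while geodesic representatives produce exactly one; hence the minimal number of simple intersections near $p$ is $c_p$, and this is consistent with the global identity $i(\gamma_1,\gamma_2)=\sum_p m_p$ recorded above. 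This gives $P_*I(\gamma_1,\gamma_2)=\sum_p m_p\,\delta_p$. The step I expect to be the real work is Step 2 — unwinding the flow‑box structure of $(\mathcal{F}_1,\mathcal{F}_2)$ near each $q_{a,b}$ and verifying the "single transverse crossing" count, together with the bookkeeping of several branches and of the degenerate case $\gamma_1=\gamma_2$; but this is the same mechanism already used for Proposition~\ref{prop: length.LX}, so I do not anticipate genuine difficulty.
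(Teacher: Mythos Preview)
Your proposal is correct and follows essentially the same approach as the paper's own proof: locate the support of $I(\gamma_1,\gamma_2)$ as the triples $(p,\lambda_1,\lambda_2)$ with $p\in\gamma_1\cap\gamma_2$ and $\lambda_i$ tangent to a branch of $\gamma_i$, and then count these atoms over each $p$ to get $m_p$. The paper's argument is a terse two-sentence version of exactly this, whereas you have carefully unpacked the flow-box computation showing each atom has unit mass and supplied an explicit justification that the branch count equals the multiplicity; the underlying idea is identical.
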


\begin{proof}
It is easy to see that the only points in the support are the ones at the intersection of 
$\gamma_1$ and $\gamma_2$. On the other hand, for an intersection point $p$, the point $(p,\lambda_1,\lambda_2)$ is in the support of $I(\gamma_1,\gamma_2)$ if and only if $\lambda_1$ and $\lambda_2$ are tangent lines to $\gamma_1$ and $\gamma_2$, respectively. Therefore, the number of points in $\mathcal{I}_1(X)$
with the third component equal to $p$ is exactly $m_p$, the multiplicity at $p$.
\end{proof}

\begin{Prop}\label{prop: meas.I(X)} The measure $I(L_X,L_X)$ is locally equal to $1/4\sin(\theta) \, \alpha\times d\theta_1 \times d\theta_2$, where $\alpha$ is the area measure on $X$, $\theta_1$ and  $\theta_2$ are Lebesgue measure on $S^1$ corresponding to the second and third components in $\mathcal{I}_1(X)$, and $\theta$ is the angle between $\theta_1$ and $\theta_2$. 
\end{Prop}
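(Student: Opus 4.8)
The plan is to prove this by a local computation, unwinding the definition of $I(L_X,L_X)$ as a product of transverse measures and then carrying out an elementary change of variables. Fix a point $(q,\lambda_1^0,\lambda_2^0)\in\mathcal{I}_1(X)$ with $\lambda_1^0\neq\lambda_2^0$ and let $g_i^0$ be the geodesic through $q$ in the direction $\lambda_i^0$. Near this point $\mathcal{F}_1$ and $\mathcal{F}_2$ are transverse with $2$-dimensional leaves, of complementary dimension in the $4$-manifold $\mathcal{I}_1(X)$, so $\mathcal{I}_1(X)$ is locally the product of the two (local) leaf spaces, a point corresponding to the pair of leaves through it. Via $P_i$, the leaf space of $\mathcal{F}_i$ is identified with the local leaf space of $\mathcal{F}$ on $\mathbb{P}(X)$, which in turn is a neighborhood of $g_i^0$ in the space $G(\mathbb{H})$ of geodesics of $\mathbb{H}$; under this identification $P_i^*(L_X)$ becomes the Liouville transverse measure $L$ on geodesics. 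Hence, under the local diffeomorphism $(g_1,g_2)\mapsto\bigl(g_1\cap g_2,\,[g_1'],\,[g_2']\bigr)$ from a neighborhood of $(g_1^0,g_2^0)$ in $G(\mathbb{H})\times G(\mathbb{H})$ onto a neighborhood of $(q,\lambda_1^0,\lambda_2^0)$, the measure $I(L_X,L_X)$ corresponds to $L\times L$.

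Next I would compute $L\times L$ in explicit coordinates and transport it back. Choose a short geodesic arc $h$ through $q$ transverse to both $\lambda_1^0$ and $\lambda_2^0$, and parametrize the geodesics near $g_i^0$ by $(s_i,\psi_i)$, where the geodesic meets $h$ at signed arc length $s_i$ and makes angle $\psi_i$ with $h$. By the identity $L_X=\tfrac12\sin\psi\,d\psi\,ds$ recalled in \S\ref{sec: Pre}, this presents $I(L_X,L_X)$ as $\tfrac14\sin\psi_1\sin\psi_2\,|ds_1\,d\psi_1\,ds_2\,d\psi_2|$ near $(q,\lambda_1^0,\lambda_2^0)$. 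I would then pass to the coordinates $(x,\theta_1,\theta_2)$ on $\mathcal{I}_1(X)$, where $x=g_1\cap g_2$ and $\theta_i$ is the direction of $g_i$ at $x$. Working in normal coordinates at $q$ — where geodesics through $q$ are straight lines and the hyperbolic area form equals $dx^1\,dx^2$ at $q$ — the change of variables reduces to plane geometry: $\theta_i$ depends only on $\psi_i$ with $|d\theta_i|=|d\psi_i|$, and the Jacobian of $(s_1,s_2)\mapsto g_1\cap g_2$ at fixed directions has absolute value $\sin\psi_1\sin\psi_2/|\sin\theta|$, where $\theta$ is the angle between $g_1$ and $g_2$, equivalently the angle between $\theta_1$ and $\theta_2$. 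The $\sin\psi_i$ factors then cancel, and one is left with $I(L_X,L_X)=\tfrac14\sin\theta\;\alpha\times d\theta_1\times d\theta_2$.

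The point requiring care — and the main obstacle — is to ensure this local identity holds with the precise constant, and not merely to leading order in the normal-coordinate approximation. I would settle this by homogeneity: $L_X$ is built from the $SL(2,\mathbb{R})$-invariant measure $L$, so both $I(L_X,L_X)$ and $\alpha\times d\theta_1\times d\theta_2$ descend from $SL(2,\mathbb{R})$-invariant measures on $\mathcal{I}_1(\mathbb{H})$, and $SL(2,\mathbb{R})$ acts transitively on each level set of the angle between $\lambda_1$ and $\lambda_2$; hence $I(L_X,L_X)=f(\theta)\,\alpha\times d\theta_1\times d\theta_2$ for a function $f$ of the angle alone, and it suffices to evaluate $f$ at one configuration per angle, which is exactly the computation above giving $f(\theta)=\tfrac14\sin\theta$. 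As a consistency check, integrating over $\mathcal{I}_1(X)$ gives $\tfrac14\,\alpha(X)\int_{[0,\pi)^2}|\sin(\theta_1-\theta_2)|\,d\theta_1\,d\theta_2=\tfrac14\cdot 2\pi(2g-2+n)\cdot 2\pi=\pi^2(2g-2+n)=i(L_X,L_X)$, in accordance with Proposition \ref{prop: length.LX}. Apart from this, the remaining work is pure bookkeeping of $\sin$ factors: the two $\sin\psi_i$ coming from the two copies of $L_X$ cancel against those produced by the Jacobian, and what survives is the factor $\sin\theta$ from the crossing angle, with overall constant $\tfrac14$.
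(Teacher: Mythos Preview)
Your argument is correct and follows the same local-computation strategy as the paper: write each copy of $L_X$ as $\tfrac12\sin\psi\,d\psi\,ds$ relative to a reference arc, take the product, and change variables. The paper's execution is shorter because instead of an auxiliary arc $h$ it uses the two geodesics as reference arcs for each other: with $x$ arclength along $g_2$ and $y$ arclength along $g_1$, the angle in each Liouville factor is already the crossing angle $\theta$, so the product is $\tfrac14\sin^2\theta\,d\theta_1\,d\theta_2\,dx\,dy$, and the single identity $\alpha=\sin\theta\,dx\,dy$ (area in oblique coordinates) cancels one $\sin\theta$ and finishes --- no separate Jacobian or invariance argument needed.
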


\begin{proof}

As we explained earlier, $\mathcal{I}_1(X)$ is foliated by $\mathcal{F}_1$ and $\mathcal{F}_2$, and the transverse measures on their leaves are locally $1/2 \sin(\theta_1) \, d\theta_1 dx$ and $1/2 \sin(\theta_2) \, d\theta_2 dy$. The measure $I(L_X,L_X)$ is their product measure and is locally $1/4 \sin(\theta) \, d\theta_1 d\theta_2 \alpha$. We used the fact that when the angle between two geodesics is $\theta$, then the hyperbolic area measure is locally $\sin(\theta)\, dx dy$ where $dx,dy$ are the length measures along the two geodesics. Moreover, in this setting $\sin(\theta_1)=\sin(\theta_2)=\sin(\theta)$.   
\end{proof}

\begin{Cor} \label{cor: push Lio}
The induced measure $P_*I(L_X, L_X)$ is $\frac{\pi}{2}\alpha$ and $i(L_X,L_X)=\pi^2(2g-2+n)$.
\end{Cor}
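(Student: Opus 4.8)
The plan is to read off both assertions from the local description of $I(L_X,L_X)$ given in Proposition \ref{prop: meas.I(X)} by integrating out the two angular fiber variables. The map $P\colon\mathcal I_1(X)\to X$ sends $(x,\lambda_1,\lambda_2)$ to $x$, so the fiber over a point $x$ is (the complement of a single point in) the set of pairs of tangent lines at $x$, a torus which I would coordinatize by $(\theta_1,\theta_2)$, with each $\theta_i$ ranging over $\mathbb R/\pi\mathbb Z$ because the $\lambda_i$ are unoriented lines, not vectors. By Proposition \ref{prop: meas.I(X)}, on such a fiber chart $I(L_X,L_X)$ is the product of $\tfrac14\sin\theta\,d\theta_1\,d\theta_2$ against the area measure $\alpha$ in the base direction, where $\theta$ is the angle between the two lines; here one uses that the inter-line angle has a well-defined sine, namely $\sin\theta=\lvert\sin(\theta_1-\theta_2)\rvert$ (this is invariant under $\theta_i\mapsto\theta_i+\pi$, hence descends to $\mathbb R/\pi\mathbb Z$). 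Since removing the diagonal deletes only a fiberwise null set, pushing forward simply multiplies $\alpha$ by the constant
$$
c:=\frac14\int_0^\pi\!\!\int_0^\pi \bigl\lvert\sin(\theta_1-\theta_2)\bigr\rvert \, d\theta_1\,d\theta_2,
$$
and, this constant being the same at every $x$, we get $P_*I(L_X,L_X)=c\,\alpha$ globally.

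Next I would compute $c$. Fixing $\theta_2$ and substituting $v=\theta_1-\theta_2$, the inner integral is $\int$ of $\lvert\sin v\rvert$ over an interval of length $\pi$, which equals $\int_0^\pi\sin v\,dv=2$ by periodicity of $\lvert\sin\rvert$; integrating $2$ over $\theta_2\in[0,\pi]$ gives $2\pi$, so $c=\tfrac14\cdot 2\pi=\tfrac\pi2$. This proves the first claim, $P_*I(L_X,L_X)=\tfrac\pi2\,\alpha$.

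For the intersection number I would take total masses: by definition $i(L_X,L_X)$ is the total volume of $\mathcal I_1(X)$ with respect to $I(L_X,L_X)$, which is preserved under $P_*$, so $i(L_X,L_X)=\bigl(P_*I(L_X,L_X)\bigr)(X)=\tfrac\pi2\,\alpha(X)$; since $\alpha(X)=2\pi(2g-2+n)$ by Gauss--Bonnet, this is $\pi^2(2g-2+n)$. (As a sanity check, this agrees with Proposition \ref{prop: length.LX} applied to $C=L_X$, which gives $i(L_X,L_X)=\ell_X(L_X)$.) I do not expect any genuine obstacle here: the statement is essentially a one-line consequence of Proposition \ref{prop: meas.I(X)} together with an elementary trigonometric integral, and finiteness of $\alpha(X)$ makes every integral in sight finite; the only thing requiring a moment's care is the bookkeeping of the angular ranges (lines, so $\mathbb R/\pi\mathbb Z$) and the observation that $\sin$ of the inter-line angle is well defined.
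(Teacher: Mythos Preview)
Your proposal is correct and takes essentially the same approach as the paper: both invoke Proposition~\ref{prop: meas.I(X)} and integrate out the angular fiber variables to obtain the constant $\pi/2$, then read off $i(L_X,L_X)$ from the total area. The only cosmetic difference is that the paper factors the pushforward through $\mathbb{P}(X)$ (integrating out $\theta_2$ first to get $\tfrac12\,\alpha\,d\theta_1$, then $\theta_1$), whereas you compute the double integral $\tfrac14\int_0^\pi\int_0^\pi|\sin(\theta_1-\theta_2)|\,d\theta_1\,d\theta_2=\tfrac\pi2$ directly.
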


\begin{proof}
Consider the following diagram:

\[
\xymatrix{I(X) \ar[r]^{P_2} \ar[dr]^P & \mathbb{P}(X) \ar[d]^{P'}\\  & X
}
\]
where $P'$ sends $(x, \lambda_1)$ to $x$. From Proposition \ref{prop: meas.I(X)}, we can see that the push forward of the measure $I(L_X, L_X)$ to $\mathbb{P}(X)$ by $P_2$ is the volume measure on $\mathbb{P}(X)$ which is locally equal to $1/2 \, \alpha d\theta$. Therefore, its pushforward to $X$ is $\pi/2 \, \alpha$ (since the range of $\theta$ is $[0,\pi]$), as required.
\end{proof}

\section{Intersection measure}

As we explained in \S \ref{sec: Pre}, every two geodesic currents $C_1,C_2$ define an intersection measure $I(C_1,C_2)$ on $\mathcal{I}_1(X)$. This section studies the basic properties of $I(C_1,C_2)$.

The intersection number $i(C_1,C_2)$ is the total measure of $\mathcal{I}_1(X)$ with respect to $I(C_1,C_2)$. In \cite[Prop. 4.5]{Bon.gc.3}, Bonahon proves that the intersection number (as a function on $\mathcal{C}\times \mathcal{C}$) is continuous when $X$ is compact (or when we restrict the geodesics into a compact subset of $X$). When $X$ has a finite area, the intersection number is not continuous, but we show that a weaker similar property still holds. In this section, we also prove the density of the intersection points between all closed geodesics. Although this follows directly from Theorem \ref{theorem: main}, we include a more concrete and self-contained proof here, which may be of independent interest.

Let $X$ be a complete hyperbolic surface of finite area and $\phi_t$ the geodesic flow. Let $\phi_{[0,t]}(v)$ denote the geodesic arc in $T_1(X)$ which has length $t$ and starts at $v \in T_1(X)$.

  Let $D=d\times d\theta$ be the metric on $T_1(X)$ where $d$ is the hyperbolic metric on $X$ and $d\theta$ refers to the Lebesgue measure on $S^1$. We will apply the Anosov closing lemma to prove the density result; see \cite{katokBook} for a reference to this lemma.

\begin{Lemma}(Anosov closing lemma)\label{lem: Anosov}
For any $\epsilon>0$ there exists $\delta>0$ such that if 
\begin{equation}
D(v, \phi_t(v))< \delta
\end{equation}
then there is a closed geodesic $\gamma=\{ \phi_t(w)\}_{s=0}^{t'}$ of length $t'$ where $|t-t'|<\epsilon$ and 
\begin{equation}
 D(\phi_s(v), \phi_s(w)) <\epsilon \quad \text{for each} \enspace  0\leq s \leq t .
\end{equation}

\end{Lemma}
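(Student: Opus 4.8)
The plan is to lift the situation to the universal cover $\mathbb{H}$ and reduce the statement to a fixed-point / axis argument for the corresponding deck transformation. First I would work in $T_1(\mathbb{H})$: pick a lift $\tilde v$ of $v$, flow for time $t$ to get $\phi_t(\tilde v)$, and use the hypothesis $D(v,\phi_t(v))<\delta$ to produce a deck transformation $g\in\Gamma$ with $D(\phi_t(\tilde v),g\tilde v)<\delta$ (the metric $D$ being the natural $\Gamma$-invariant lift of $D$). So $g$ moves the point $\tilde v$ a distance at most roughly $t$ and moves $\phi_t(\tilde v)$ a distance at most $\delta$; geometrically $g$ translates the geodesic through $\tilde v$ approximately onto itself over a long segment, so $g$ must be hyperbolic for $\delta$ small (it cannot be elliptic since it has a nearly-invariant long geodesic segment, and it cannot be parabolic by the same reason once $\delta$ is small relative to the thick part — or one restricts attention so that the relevant segment stays in the thick part, which is automatic since a parabolic would force the segment deep into a cusp).

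The closed geodesic $\gamma$ is then the projection to $X$ of the axis $A_g$ of $g$, with length $t' = $ translation length of $g$. The two quantitative claims — $|t-t'|<\epsilon$ and $D(\phi_s(v),\phi_s(w))<\epsilon$ for $0\le s\le t$ — I would get from hyperbolic-geometry estimates comparing the geodesic segment $[\tilde p, \phi_t(\tilde v)]$ (where $\tilde p$ is the basepoint of $\tilde v$) with the axis $A_g$. The key elementary fact: in $\mathbb{H}$, if a geodesic segment of length $t$ has both endpoints within $\delta$ of the axis of a hyperbolic isometry whose translation length is comparable to $t$, then by convexity of the distance-to-a-geodesic function the whole segment stays within $O(\delta)$ of the axis, and the translation length differs from $t$ by $O(\delta)$. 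One packages this by choosing $w$ to be (a lift of) the point on $A_g$ nearest to $\tilde p$ (or the appropriately phase-matched point), and then $\phi_s(w)$ tracks $\phi_s(v)$ because two geodesics in $\mathbb{H}$ that are $O(\delta)$-close at two far-apart times are uniformly $O(\delta)$-close in between (convexity again), and the $S^1$/angle component is controlled by the same comparison. Finally one chooses $\delta = \delta(\epsilon)$ so that all these $O(\delta)$ bounds are $<\epsilon$; this is where uniformity is used, and it is clean because all the estimates are $\Gamma$-equivariant and hence depend only on $\mathbb{H}$, not on the point.

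The main obstacle I anticipate is the cusp issue: a priori the near-recurrent orbit segment could live in a thin cuspidal part, where ``$D(v,\phi_t(v))<\delta$'' is compatible with $g$ being parabolic rather than hyperbolic, and then there is no closed geodesic. The resolution is to observe that if $g$ were parabolic, the segment $\phi_{[0,t]}(\tilde v)$ would have to penetrate a horoball to depth $\to\infty$ as $\delta\to 0$ and then come back, which forces $D(\phi_s(v),\phi_s(w))$ to be large for the candidate $w$ — contradicting what we want to prove, so effectively one shows that for $\delta$ small enough relative to $\epsilon$ the parabolic case simply cannot produce the bad configuration, i.e. $g$ is hyperbolic. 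Concretely I would argue: the displacement function $s\mapsto d(\phi_s(\tilde v), g\,\phi_s(\tilde v))$ is a convex function on $[0,t]$ that is $<\delta$ at both ends hence $<\delta$ throughout; a parabolic isometry has displacement function with no interior minimum bounded away from its infimum in a way incompatible with this unless the orbit is deep in the horoball, and one rules that out by a length bookkeeping on the excursion. Once $g$ is known hyperbolic, the rest is the standard convexity estimates sketched above.
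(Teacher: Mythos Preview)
The paper does not give a proof of this lemma at all: it is simply stated as the classical Anosov closing lemma and then invoked in the proof of Proposition~\ref{prop: dense.intpoint}. So there is no ``paper's own proof'' to compare against; your write-up is a proof where the paper only quotes a known fact.

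That said, your outline is the standard one and is essentially correct: lift to $T_1(\mathbb H)$, produce the deck transformation $g\in\Gamma$ with $D(g\tilde v,\phi_t(\tilde v))<\delta$, argue $g$ is hyperbolic, take $\gamma$ to be the projection of its axis, and use convexity of the distance function in $\mathbb H$ to get the uniform $\epsilon$-shadowing and $|t-t'|<\epsilon$.

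One genuine slip: your argument for excluding the parabolic case is wrong as written. You claim the displacement function $s\mapsto d(\phi_s(\tilde v),\,g\,\phi_s(\tilde v))$ is ``$<\delta$ at both ends''. It is not: at $s=0$ we have $d(\tilde p,g\tilde p)\approx d(\tilde p,\phi_t\tilde p)=t$, not $\delta$, and similarly at $s=t$. What you actually know is that $g$ carries the \emph{oriented geodesic} $\ell$ through $\tilde v$ to a geodesic $\delta$-close to $\ell$ in $T_1(\mathbb H)$, hence $g$ moves both ideal endpoints $\ell^{\pm}\in\partial\mathbb H$ only slightly. A parabolic element of $\mathrm{PSL}(2,\mathbb R)$ fixes exactly one boundary point and moves every other boundary point by an amount bounded below (depending on the distance to the fixed point); for $\delta$ small this forces $g$ to have two fixed points near $\ell^{\pm}$, hence $g$ is hyperbolic. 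That replaces your displacement-function paragraph; the rest of your sketch then goes through.
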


\begin{Prop}\label{prop: dense.intpoint}
The set of all intersection points between all closed geodesics is a dense subset of $X$. 
\end{Prop}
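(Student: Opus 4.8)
The plan is to show that for any open disk $U \subset X$ and any $\epsilon > 0$ (small relative to the injectivity radius on a compact part meeting $U$), there is an intersection point of two closed geodesics inside $U$. First I would produce, inside $U$, a small geodesic arc $\phi_{[0,s]}(v)$ that nearly closes up while also nearly crossing itself transversally. Concretely, pick $v \in T_1(X)$ based at a point of $U$; since the geodesic flow is topologically transitive (indeed mixing) on the non-wandering set, the forward orbit of $v$ returns to a small neighborhood of $-v$ in $T_1(X)$: there is a large time $t$ with $D(\phi_t(v), -v)$ as small as we like, and moreover we can arrange that at some intermediate time $0 < r < t$ the vector $\phi_r(v)$ is based near the basepoint of $v$ but points in a direction making a definite angle (say close to $\pi/2$) with $v$. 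This gives a long orbit segment starting and ending near $U$ whose trace through $U$ has a genuine transverse near-self-crossing.

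Next I would apply the Anosov closing lemma (Lemma \ref{lem: Anosov}) to the almost-closed segment: because $D(v,\phi_t(v))$ is small, there is an honest closed geodesic $\gamma$ of length $\approx t$ shadowing the segment, i.e. $D(\phi_s(v),\phi_s(w)) < \epsilon$ for $0 \le s \le t$. The shadowing is $C^0$-close in $T_1(X)$, hence $C^0$-close in $X$, so $\gamma$ passes through $U$ and, because the original segment had a transverse near-crossing of definite angle inside $U$, the closed geodesic $\gamma$ itself has a transverse self-intersection point lying in $U$ (a transversality condition with angle bounded away from $0$ is stable under small $C^1$ perturbation; alternatively one sees the crossing persist by an intermediate-value/degree argument). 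If one prefers to avoid self-intersections, the same construction with two independent returning directions yields two distinct closed geodesics $\gamma_1,\gamma_2$ crossing transversally inside $U$. Since every self-intersection point of a single closed geodesic is also an intersection point in the sense of $I(\gamma_T,\gamma_T)$ (and in any case $\gamma$ and a small perturbation, or $\gamma$ and another closed geodesic through $U$, intersect there), we get the desired intersection point in $U$.

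The main obstacle is the control of transversality: the Anosov closing lemma as stated only gives $C^0$-shadowing in the metric $D = d \times d\theta$, which already controls directions, so a near-crossing with angle bounded away from $0$ does persist; but one must be careful that the shadowing closed geodesic does not accidentally develop the crossing outside $U$ or fail to cross at all. I would handle this by first fixing a small geodesic sub-arc $J \subset U$ transverse to $v$'s direction and arranging the intermediate return $\phi_r(v)$ to cross $J$ transversally well inside $U$ with angle near $\pi/2$; then the $\epsilon$-shadow of this piece still crosses $J$ (for $\epsilon$ much smaller than the length of $J$ and the distance from $J$ to $\partial U$), giving the intersection point honestly inside $U$. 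A secondary point is justifying the existence of the returning orbit with a prescribed intermediate direction: this follows from topological transitivity of the geodesic flow on $T_1(X)$ (true for any complete finite-area hyperbolic surface) applied in $T_1(X)$, choosing a target neighborhood that records both ``near the basepoint of $v$'' and ``direction transverse to $v$'', together with a second transitivity step to then come back near $v$ itself. Everything else is routine.
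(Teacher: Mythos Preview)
Your argument is correct in its essentials and rests on the same two ingredients as the paper---recurrence/transitivity of the geodesic flow plus the Anosov closing lemma---but you take a more elaborate route. The paper simply picks \emph{two} vectors $v_1,v_2$ based in the ball $B$ pointing in transverse directions, uses recurrence to find times $t_i$ with $D(v_i,\phi_{t_i}(v_i))<\delta$, and closes each orbit up separately via Lemma~\ref{lem: Anosov} to obtain closed geodesics $\gamma_1,\gamma_2$. Since each $\gamma_i$ $\epsilon$-shadows the geodesic through $v_i$ inside $B$, and those two local geodesic arcs cross transversally in $B$, so do $\gamma_1$ and $\gamma_2$. There is no need to engineer a \emph{single} orbit with a prescribed intermediate transverse return and then a further return near the starting vector; that two-step transitivity argument is the extra work your primary construction carries. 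Your own alternative (``two independent returning directions yields two distinct closed geodesics $\gamma_1,\gamma_2$'') is precisely the paper's proof, and is the cleaner way to go.

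One slip to fix: you first ask for $D(\phi_t(v),-v)$ small, but the Anosov closing lemma requires $D(\phi_t(v),v)$ small, as you in fact write two sentences later. A return near $-v$ is not an almost-periodic orbit in $T_1(X)$ (the tangent direction is reversed), and Lemma~\ref{lem: Anosov} would not apply to it; this looks like a typo rather than a conceptual error, but it should be corrected.
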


\begin{proof}
     Consider a small open ball $B$ in $X$. We show that two closed geodesics exist that intersect in $B$. It is well known that almost all points in $T_1(X)$ are recurrent with respect to the geodesic flow. In other words, for almost every point $p \in T_1(X)$ there is a sequence $t_i \to \infty$ such that $\phi_{t_i}(p)$ tends to $p$ when $i \to \infty$. Let $v_1,v_2$ be two such vectors in $B$ on two transverse geodesic arcs in $B$ such that $D(v_1,\phi_{t_1}(v_1)), D(v_2,\phi_{t_2}(v_2))<\delta$ for some $t_1,t_2>0$. We choose $\delta>0$ small enough. If we connect $v_1$ and $\phi_{t_1}(v_1)$ by a small arc, then we obtain a closed curve and by Anosov closing lemma its geodesic representative $\gamma_1$ stays $\epsilon-$close to $\phi_{[0,t_1]}(v_1)$ at all time $t \in [0,t_1]$. Therefore, $\gamma_1$ is very close to the geodesic passed through $v_1$ inside $B$, Similarly, we can construct a closed geodesic $\gamma_2$ using $v_2$ where $\gamma_2$ is very close to $\phi_{[0,t_2]}(v_2)$. The geodesics containing $v_1,v_2$ intersect transversely in $B$, therefore, $\gamma_1$ and $\gamma_2$ intersect at a point in $B$.     
\end{proof}

\begin{Lemma}\label{lem: int.noncomp.notcon}
The intersection number $i(.,.)$ is not a continuous function when $X$ has cusp(s).
\end{Lemma}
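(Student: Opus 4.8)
The statement to prove is Lemma \ref{lem: int.noncomp.notcon}: that the intersection number $i(\cdot,\cdot)$ on $\mathcal{C}\times\mathcal{C}$ fails to be continuous when $X$ has a cusp. The natural strategy is to exhibit an explicit sequence of closed geodesics (or weighted closed geodesics) converging in the current topology to a limit whose intersection number with some fixed current is strictly larger than the limit of the intersection numbers — i.e. a failure of upper semicontinuity coming from intersections escaping into the cusp. The mechanism, flagged in the ``Idea of the proof'' paragraph, is that in a cusp the intersection number can grow exponentially in length while the current mass in the cusp neighborhood stays bounded, so a sequence of curves whose current mass migrates into the cusp can have intersection numbers that do not converge to the intersection number of the limiting current.

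Concretely, I would work in a horoball neighborhood $Y$ of a cusp with $\partial Y$ a horocycle of length $1$, and build geodesics $\gamma_n$ that spend most of their length making a single long excursion of winding number $n$ around the cusp (an $n$-exc in the paper's terminology), normalized so that $\gamma_n/\ell(\gamma_n)$ converges as $n\to\infty$. Since the excursion penetrates depth $\sim 2\log n$ into the cusp and the bulk of the curve sits near $\partial Y$, the normalized currents $\gamma_n/\ell(\gamma_n)$ converge weakly to a current $C$ supported on (a neighborhood of) the boundary horocycle — in particular one checks that no mass survives deep in the cusp in the limit, because for any fixed compact set $K$ the fraction of $\gamma_n$'s length lying in $T_1(K)$ tends to a definite value while the deep part contributes length $o(\ell(\gamma_n))$ after normalization. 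However, a single $n$-exc self-intersects roughly $n$ times with itself, and more to the point a fixed test curve $\beta$ crossing the collar transversally has $i(\beta,\gamma_n)\gtrsim n$ while $\ell(\gamma_n)\sim c\,n$ (one revolution of winding number $n$ has length linear in $n$), so $i(\beta,\gamma_n)/\ell(\gamma_n)$ stays bounded away from $i(\beta,C)$, which is finite and controlled by the mass of $C$ near $\beta$. Rescaling to make $\ell$ converge, this yields $\limsup i(\beta,\gamma_n/\ell(\gamma_n)) > i(\beta,C)$, contradicting continuity. (Alternatively one can cite Example \ref{ex: not.cont.int}, to which the text explicitly defers; but giving the explicit family makes the lemma self-contained.)

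The key steps, in order, are: (1) fix the cusp neighborhood $Y$ and set up coordinates on the collar so that ``winding number $n$'' and ``depth $\sim 2\log n$'' are precise; (2) construct $\gamma_n$ — e.g. as the geodesic representative of a based loop that goes once around a fixed short curve near $\partial Y$ composed with a power $n$ of the cusp loop, or directly via the Anosov closing lemma (Lemma \ref{lem: Anosov}) applied to a near-closed orbit that does an $n$-fold winding excursion; (3) verify weak convergence of $\gamma_n/\ell(\gamma_n)$ to a limiting current $C$, using that on any compact $K$ the restricted length measures converge and that the tail mass in the cusp vanishes after normalization; (4) estimate from below $i(\beta,\gamma_n)$ for a suitable fixed transverse closed geodesic $\beta$ (or simply take $\beta=\gamma_m$ for fixed $m$ and count crossings in the collar), getting $i(\beta,\gamma_n)\geq c'n$; (5) compute or bound $i(\beta,C)$ from above — it is finite because $C$ has finite length and $\beta$ is a fixed curve, so by Proposition \ref{prop: length.LX}-type reasoning $i(\beta,C)\leq \ell(\beta)\cdot(\text{sup local density of }C)$ along $\beta$, a finite number; (6) compare: $i(\beta,\gamma_n)/\ell(\gamma_n)\to c'/c \ne i(\beta,C)$ (indeed one can arrange the left side to exceed the right), so $i$ is discontinuous at $(\beta,C)$.

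The main obstacle I anticipate is step (3) together with the quantitative comparison in (4)–(5): one must be careful that the normalized currents genuinely converge (so that ``discontinuity'' is a meaningful statement and not just ``the sequence has no limit''), which forces a slightly delicate choice of $\gamma_n$ — the excursion length must grow like $n$ but in a controlled way, and the ``bulk'' part of $\gamma_n$ near $\partial Y$ must be chosen to have a genuine weak limit (for instance by having $\gamma_n$ agree with a fixed closed geodesic off the collar, up to a boundedly long connecting arc). Getting the lower bound $i(\beta,\gamma_n)\gtrsim n$ is easy geometry — a depth-$d$ excursion crosses any fixed geodesic through the collar $\Theta(e^{d/2})=\Theta(n)$ times — and the upper bound on $i(\beta,C)$ is soft, so the real care is in the bookkeeping of the limit. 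If one prefers to avoid this entirely, the cleanest route is to simply point to Example \ref{ex: not.cont.int} for the explicit construction and present here only the abstract reason (exponential-vs-linear growth of intersection number in the cusp obstructs the continuity that holds in the compact case by \cite[Prop. 4.5]{Bon.gc.3}).
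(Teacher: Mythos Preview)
Your overall instinct---build $\gamma_n$ with a single $n$-excursion into a cusp and watch the intersection number blow up---is the right one, and it is exactly the construction the paper uses. But the quantitative picture you sketch is wrong in several linked places, and the version you actually propose (pairing with a fixed closed geodesic $\beta$) does \emph{not} detect any discontinuity. First, an $n$-excursion has hyperbolic length $\sim 2\log n$, not $\sim cn$; this is Lemma~\ref{lem: excursion.length}. Second, for a fixed \emph{closed} geodesic $\beta$, the number $i(\beta,\gamma_n)$ stays bounded: $\beta$ has only finitely many excursions into $Y$, each of some fixed winding number $m$, and by Lemma~\ref{lem: n,m,exc.int} an $m$-exc and an $n$-exc meet at most $2\min(m,n)+2=2m+2$ times. (Your heuristic ``a depth-$d$ excursion crosses any fixed geodesic through the collar $\Theta(e^{d/2})$ times'' is collar-of-a-closed-geodesic intuition transplanted to a cusp, where it fails; a closed geodesic never goes deep into the cusp.) Third, the normalized currents do not accumulate near $\partial Y$: the fraction of the length of $\gamma_n$ lying below any fixed height $h$ is $\sim \log h/\log n\to 0$, so $\gamma_n/\ell(\gamma_n)\to 0$ in $C_c^*(T_1(X))$. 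Putting these together, $i(\beta,\gamma_n)/\ell(\gamma_n)\to 0=i(\beta,0)$, and no discontinuity is visible.

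The fix is the one you mention and then set aside: use the \emph{self}-intersection. An $n$-excursion self-intersects $\sim n$ times, so $i(\gamma_n,\gamma_n)\sim 2n\to\infty$. The paper runs this without normalizing: as currents, $\gamma_n$ converges in $C_c^*(T_1(X))$ to the bi-infinite geodesic $r$ that agrees with $\gamma$ outside $Y$ and goes straight into the cusp on both ends; then $i(r,r)$ is a fixed finite number while $i(\gamma_n,\gamma_n)\to\infty$, and that is already the discontinuity. (Equivalently, with your normalization, $\gamma_n/\ell(\gamma_n)\to 0$ while $i(\gamma_n,\gamma_n)/\ell(\gamma_n)^2\sim n/(2\log n)^2\to\infty\neq i(0,0)$.) Note also that Example~\ref{ex: not.cont.int} is not an independent source you can cite---it is precisely the construction from this lemma, recorded immediately afterwards.
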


\textit{Sketch of the proof.}
Let $N$ be a horoball neighborhood of a cusp. Consider an essential closed curve $\gamma$ that enters $N$ once, goes around the cusp once, and leaves $N$. Remove the subarc of $\gamma$ inside $N$ and instead attach a subarc in $N$ which goes around the cusp exactly $n$ times. We obtain a closed curve and refer to its geodesic representative as $\gamma_n$. See Figure \ref{fig: gamman}.
The limit of the geodesic currents $\gamma_n'$s is a geodesic ray $r$ whose ends go to $\infty$ (in the cusp) as $n \to \infty$. We know that $i(\gamma_n,\gamma_n)\sim 2n$ and $i(r,r) \sim c_0$, for a constant $c_0$ depending on $\gamma$. This shows that $i(.,.)$ is not continuous, as required.

\begin{figure}[H]
    \centering

\tikzset{every picture/.style={line width=0.75pt}} 

\begin{tikzpicture}[x=0.60pt,y=0.65pt,yscale=-1,xscale=1]

\draw    (248.2,120.2) .. controls (178.2,147.2) and (208.2,232.2) .. (306.2,220.2) ;
\draw    (306.2,220.2) .. controls (365.2,207.2) and (386.2,159.2) .. (527.2,132.2) ;
\draw    (248.2,120.2) .. controls (303.2,105.2) and (426.2,154.2) .. (520.2,124.2) ;
\draw    (254.2,168.2) .. controls (275.2,156.2) and (295.2,157.2) .. (318.2,167.2) ;
\draw    (265,162) .. controls (281.2,184.2) and (298.2,176.2) .. (306.2,163.2) ;
\draw    (236.2,174.2) .. controls (241.2,197.8) and (294,218) .. (334,188) ;
\draw    (236.2,174.2) .. controls (224.2,104.8) and (342.2,148.2) .. (354.2,153.2) ;
\draw    (354.2,153.2) .. controls (372.2,160.2) and (383.2,180.2) .. (403.2,171.2) ;
\draw  [dash pattern={on 0.84pt off 2.51pt}]  (403.2,171.2) .. controls (423.73,156.26) and (413.88,135.08) .. (423.65,133.6) ;
\draw    (434.88,159.08) .. controls (437.87,149.6) and (436.87,144.6) .. (444.87,139.6) ;
\draw    (445.87,148.6) .. controls (428.51,149.99) and (435.53,134.47) .. (423.65,133.6) ;
\draw    (334,188) .. controls (374,158) and (363.13,155.85) .. (384.13,129.85) ;
\draw  [dash pattern={on 0.84pt off 2.51pt}]  (384.13,129.85) .. controls (424.65,133.6) and (407.88,160.08) .. (434.88,159.08) ;
\draw  [dash pattern={on 0.84pt off 2.51pt}]  (466.67,133.04) .. controls (481.32,134.43) and (476.02,143.73) .. (487.67,141.04) ;
\draw  [dash pattern={on 0.84pt off 2.51pt}]  (464.67,148.04) .. controls (485.47,137.86) and (479.68,132.9) .. (486.17,131.45) ;
\draw    (486.17,131.45) .. controls (498.09,132.22) and (494.23,138.79) .. (487.67,141.04) ;
\draw    (456.93,143.27) .. controls (461,137.93) and (462.93,135.27) .. (466.67,133.04) ;
\draw    (452.73,138.2) .. controls (463.73,140.2) and (459.73,144.2) .. (464.67,148.04) ;

\draw (428,108.4) node [anchor=north west][inner sep=0.75pt]    {$\gamma _{n}$};
\draw (439.05,144.11) node [anchor=north west][inner sep=0.75pt]  [font=\tiny,rotate=-340.94,xslant=0.01]  {$\dotsc $};

\end{tikzpicture}

    \caption{An example that shows the intersection number is not continuous}
    \label{fig: gamman}
\end{figure}

In the compact case, the continuity of the intersection number relies on the fact that if two geodesic currents assign a small mass to $H-boxes$, then their intersection number is correspondingly small. This control breaks down in the presence of cusps, as seen in the construction in Lemma~\ref{lem: int.noncomp.notcon}, where a geodesic current may have a small mass in the cusp region while still having a large number of self-intersections. More precisely, we have:

\begin{Ex}\label{ex: not.cont.int}
    Consider the geodesic current $\gamma_n/\ell(\gamma_n)$ as described in Lemma \ref{lem: int.noncomp.notcon}. Its measure inside an $\epsilon-$neighborhood of the cusp is small, but its intersection number in that neighborhood is 
    
\begin{equation}
    \sim \frac{2n}{\ell(\gamma_n)^2} \sim \frac{2n}{c+2\log n}
    \end{equation}
\end{Ex}
which goes to $\infty$ when $n \to \infty$.

Here is the result weaker than the continuity of $i:\mathbb{C}\times \mathbb{C} \to \mathbb{R}$.

\begin{Thm}\label{thm: cont.I(.,.)}
    Assume that the geodesic currents $B_i$ and $C_i$ converge to $B,C\in \mathcal{C}$ in $C_c^*(T_1(X))$, as $i \to \infty$, respectively. Then as $i \to \infty$, we have
    \begin{equation}
    \int_{\mathcal{I}_1(X)} f \, dI(B_i,C_i) \to \int_{\mathcal{I}_1(X)} f \, dI(B,C),
    \end{equation}
    for any bounded continuous function $f$ supported on $K \cap  \mathcal{I}_1(X)$ for some compact subset $K \subset \mathbb{P}(X)\oplus \mathbb{P}(X)$.
\end{Thm}

Note that it is mandatory to restrict the convergence in Theorem \ref{thm: cont.I(.,.)} to the functions whose supports are disjoint from a cusps' neighborhood (being in $K \cap \mathcal{I}_1(X)$ where $K$ is a compact subset of $\mathbb{P}(X)\oplus \mathbb{P}(X)$). For instance, this theorem is not necessarily true for $f=1$; the total measures $I(B_i,C_i)(\mathcal{I}_1(X))=i(B_i,C_i)$ do not necessarily converge to the total measure $I(B,C)(\mathcal{I}_1(X))=i(B,C)$ since $i(-,-)$ is not a continuous function (see Lemma \ref{lem: int.noncomp.notcon}). The following results are the heart of the proof of Theorem \ref{thm: cont.I(.,.)}.

Let $\gamma$ be a simple closed geodesic. Define the \textit{winding number} of a geodesic arc $\alpha$, whose endpoints lie on opposite boundaries of the collar of $\gamma$, as the number of times $\alpha$ wraps around $\gamma$. More precisely, consider the orthogonal projection of $\alpha$ onto $\gamma$, which traces out a curve of length $\ell$. We define the winding number of $\alpha$ around $\gamma$ as $ \lfloor \ell/\ell(\alpha) \rfloor$.

\begin{Lemma}\label{lemma: int.arc.collar}
Consider two geodesic arcs in a collar neighborhood of a simple closed geodesic $\gamma$ whose endpoints lie on the opposite boundaries of the collar, and they have the winding numbers $n,m$ around $\gamma$. Then their intersection number is $\leq m+n+2$.  
\end{Lemma}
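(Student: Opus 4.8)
The plan is to pass to the universal cover and reduce the count of intersection points to counting how many translates, under the cyclic group generated by $\gamma$, of one lifted arc meet the other. Write $X=\mathbb{H}/\Gamma$, let $\tilde\gamma\subset\mathbb{H}$ be a lift of $\gamma$, let $g\in\Gamma$ be the primitive hyperbolic element with axis $\tilde\gamma$, and set $\ell=\ell(\gamma)$, so $g$ translates $\tilde\gamma$ by $\ell$. If $A$ is the collar of width $w$, it lifts to the $\langle g\rangle$-invariant strip $S=\{z:d(z,\tilde\gamma)<w\}$, and $A=S/\langle g\rangle$. First I would fix lifts $\tilde a,\tilde b\subset\overline S$ of the two arcs; since the endpoints of each arc lie on opposite boundary circles of the collar, $\tilde a$ (resp.\ $\tilde b$) has one endpoint on each of the two components of $\partial S$. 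Every intersection point of $a$ and $b$ inside the collar then lifts to a unique intersection point of $\tilde a$ with exactly one translate $g^k\tilde b$, $k\in\mathbb{Z}$. Because $\tilde a$ and $g^k\tilde b$ are arcs of complete geodesics of $\mathbb{H}$, they meet at most once, whence $i(a,b)\le\#\{k\in\mathbb{Z}:\tilde a\cap g^k\tilde b\neq\emptyset\}$.

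Next I would use the nearest-point projection $\pi:\mathbb{H}\to\tilde\gamma$, identifying $\tilde\gamma$ with $\mathbb{R}$ by arclength so that $g$ acts by $x\mapsto x+\ell$. The fibers of $\pi$ are exactly the complete geodesics perpendicular to $\tilde\gamma$, and any complete geodesic of $\mathbb{H}$ meets such a perpendicular at most once; hence $\pi$ is injective, and therefore strictly monotone, on every complete geodesic. In particular $\pi(\tilde a)$ is a compact interval $J_a=[x_0,x_1]$ whose endpoints are the $\pi$-images of the endpoints of $\tilde a$, and likewise $J_b=[y_0,y_1]$ for $\tilde b$. The winding-number hypothesis translates into a length bound on these intervals: with the normalization in which the winding number of an arc is the number of fundamental domains of $\langle g\rangle$ on $S$ that its lift crosses (equivalently $n=\lfloor|J_a|/\ell\rfloor$), one has $|J_a|<(n+1)\ell$ and $|J_b|<(m+1)\ell$.

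Finally I would do the arithmetic. If $\tilde a$ meets $g^k\tilde b$, applying $\pi$ to a common point gives $J_a\cap(J_b+k\ell)\neq\emptyset$, i.e.\ $(x_0-y_1)/\ell\le k\le(x_1-y_0)/\ell$; the number of integers $k$ in this range is at most $(x_1-x_0)/\ell+(y_1-y_0)/\ell+1=|J_a|/\ell+|J_b|/\ell+1<(n+1)+(m+1)+1=n+m+3$. Since $i(a,b)$ is an integer, $i(a,b)\le n+m+2$.

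I do not expect a deep obstacle: the whole argument rests on the two elementary facts that two distinct geodesics of $\mathbb{H}$ cross at most once and that nearest-point projection onto a geodesic is monotone along geodesics. The only genuine care is (i) pinning down the definition of the winding number so that the additive constant is exactly $2$ — any reasonable normalization gives $n+m+O(1)$, and the one above, together with the integrality of $i(a,b)$, yields precisely $n+m+2$ — and (ii) checking that the lift-and-translate correspondence in the first step is a bijection onto the intersection points lying in the closed collar, intersections landing on $\partial S$ causing no trouble since they occur only at the extreme values of $k$ already counted.
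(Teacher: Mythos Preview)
Your argument is correct and complete: lifting to the strip $S$, using that two hyperbolic geodesics meet at most once, and counting translates $g^k\tilde b$ via the nearest-point projection gives exactly the bound $n+m+2$ once you invoke integrality. The monotonicity of $\pi$ along geodesics (because the fibers are perpendicular geodesics, met at most once) is the key geometric input, and it is valid.

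The paper takes a different route. Rather than work with the geodesic arcs themselves, it invokes the fact that geodesic representatives minimize intersection number among arcs homotopic rel endpoints, and then replaces the arcs by convenient non-geodesic representatives. Concretely, it models the collar as $\mathbb{R}\times[0,1]$ with deck transformation $(x,y)\mapsto(x+1,y)$, applies $n$ Dehn twists around $\gamma$ (which preserves intersection numbers) to reduce to winding numbers $0$ and $m\pm n$, and then counts intersections between straight line segments by inspection. Your approach is more directly geometric and avoids both the homotopy-minimization principle and the Dehn-twist reduction; the paper's approach is more topological and would generalize to arcs that are not geodesic. Both yield the same constant, and your handling of the additive constant via the strict inequality $|J_a|/\ell<n+1$ plus integrality is cleaner than appealing to an explicit picture.
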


\begin{proof} Geodesic arcs minimize intersection number among all curves in their homotopy classes with fixed endpoints. Therefore, it is enough to establish the desired bound for some representatives of these classes.

Geodesic $\gamma$ splits the collar neighborhood into two components, denoted $R_1$ and $R_2$. Let $\alpha$ and $\beta$ be geodesic arcs with winding numbers $n$ and $m$, respectively. 

We now define new curves $\alpha'$ and $\beta'$, each homotopic to $\alpha$ and $\beta$ (with fixed endpoints), as follows. The curve $\alpha'$ goes around $\gamma$ for $n$ times within $R_1$, and in $R_2$, it is a geodesic segment orthogonal to $\gamma$, thus having zero winding number there. Similarly, we define $\beta'$ that goes around $
\gamma$ for $m$ times within $R_2$ and it is an orthogonal geodesic to $\gamma$ in $R_1$. See Figure \ref{fig: winding}.

 Now we can see that, in this configuration, $\alpha'$ and $\beta'$ intersect at most $n+1$ times in $R_1$ and at most $m+1$ times in $R_2$. Therefore, we have $i(\alpha,\beta) \leq i(\alpha',\beta')\leq n+m+2$, as required. 

\begin{figure}[H]
    \centering

\tikzset{every picture/.style={line width=0.75pt}} 

\begin{tikzpicture}[x=0.75pt,y=0.75pt,yscale=-0.6,xscale=0.5]

\draw    (168,104) .. controls (298.2,179.2) and (410.2,156.2) .. (511.2,90.2) ;
\draw    (167,253) .. controls (273.2,177.2) and (432.2,210.2) .. (536.2,254.2) ;
\draw    (168,104) .. controls (136.2,117.2) and (137.2,248.2) .. (167,253) ;
\draw    (168,104) .. controls (199.2,120.2) and (198.2,236.2) .. (167,253) ;
\draw    (511.2,90.2) .. controls (566.2,85.2) and (572.2,246.2) .. (536.2,254.2) ;
\draw  [dash pattern={on 0.84pt off 2.51pt}]  (511.2,90.2) .. controls (500.2,130.2) and (502.2,211.2) .. (536.2,254.2) ;
\draw    (335.2,150.2) .. controls (356.4,149.4) and (362.2,200.2) .. (345.2,208.2) ;
\draw  [dash pattern={on 0.84pt off 2.51pt}]  (335.2,150.2) .. controls (324.2,156.2) and (322.2,204.2) .. (345.2,208.2) ;
\draw [color={rgb, 255:red, 74; green, 144; blue, 226 }  ,draw opacity=1 ]   (188,216) .. controls (222.2,179.2) and (211.53,131.52) .. (224.53,131.52) ;
\draw [color={rgb, 255:red, 74; green, 144; blue, 226 }  ,draw opacity=1 ] [dash pattern={on 0.84pt off 2.51pt}]  (224.53,131.52) .. controls (263.53,139.52) and (255.2,214.2) .. (245.2,216.2) ;
\draw [color={rgb, 255:red, 74; green, 144; blue, 226 }  ,draw opacity=1 ]   (245.2,216.2) .. controls (231.2,216) and (245.2,141) .. (274.2,145) ;
\draw [color={rgb, 255:red, 74; green, 144; blue, 226 }  ,draw opacity=1 ] [dash pattern={on 0.84pt off 2.51pt}]  (274.2,145) .. controls (316.2,152) and (311.2,205) .. (304.2,208) ;
\draw [color={rgb, 255:red, 74; green, 144; blue, 226 }  ,draw opacity=1 ]   (304.2,208) .. controls (268.2,202) and (298.2,160) .. (352.2,172) ;
\draw [color={rgb, 255:red, 74; green, 144; blue, 226 }  ,draw opacity=1 ]   (352.2,172) .. controls (413.2,169) and (497.2,158) .. (550.2,137) ;
\draw [color={rgb, 255:red, 208; green, 2; blue, 27 }  ,draw opacity=1 ]   (193,191) .. controls (251.2,180) and (318.2,189) .. (354.2,191) ;
\draw [color={rgb, 255:red, 208; green, 2; blue, 27 }  ,draw opacity=1 ]   (354.2,191) .. controls (418.2,189) and (419.4,222.8) .. (441.4,221.8) ;
\draw [color={rgb, 255:red, 208; green, 2; blue, 27 }  ,draw opacity=1 ] [dash pattern={on 0.84pt off 2.51pt}]  (441.4,221.8) .. controls (459.01,225.32) and (457.33,202.57) .. (454.24,177.7) .. controls (451.19,153.12) and (446.77,126.47) .. (458.2,121) ;
\draw [color={rgb, 255:red, 208; green, 2; blue, 27 }  ,draw opacity=1 ]   (458.2,121) .. controls (483.2,110) and (496.2,239) .. (484.2,235) ;
\draw [color={rgb, 255:red, 208; green, 2; blue, 27 }  ,draw opacity=1 ] [dash pattern={on 0.84pt off 2.51pt}]  (484.2,235) .. controls (479.2,232) and (453.2,127) .. (487.2,106) ;
\draw [color={rgb, 255:red, 208; green, 2; blue, 27 }  ,draw opacity=1 ]   (487.2,106) .. controls (522.2,97) and (513.2,263) .. (553.2,233) ;

\draw (353,154.4) node [anchor=north west][inner sep=0.75pt]  [font=\small]  {$\gamma $};
\draw (190,138.4) node [anchor=north west][inner sep=0.75pt]    {$\textcolor[rgb]{0.29,0.56,0.89}{\alpha' }$};
\draw (402,180.4) node [anchor=north west][inner sep=0.75pt]    {$\textcolor[rgb]{0.82,0.01,0.11}{\beta'}$};
\draw (200,202.4) node [anchor=north west][inner sep=0.75pt]    {$R_{1}$};
\draw (475,205.4) node [anchor=north west][inner sep=0.75pt]    {$ \begin{array}{l}
R_{2}\\
\end{array}$};

\end{tikzpicture}

    \caption{A homotopic modification of $\alpha$ and $\beta$. The curves $\alpha'$ and $\beta'$ coincide with geodesics orthogonal to $\gamma$ within the regions $R_2$ and $R_1$, respectively.}
    \label{fig: winding}
\end{figure}
\end{proof}

 \begin{Prop}\label{prop: neighbor.diagonal}
Let $K$ be a compact subset of $\mathbb{P}(X)\oplus\mathbb{P}(X)$. Given geodesic currents $B$ and $C$ and $\epsilon>0$, there are neighborhoods $U_{\epsilon}, V_{\epsilon}$ of $B$ and $C$ in $\mathcal{C}$, respectively, and a neighbourhood $O_{\epsilon}\subset \mathcal{I}_1(X)$ of the diagonal $\triangle$ such that 
\begin{equation}
I(B',C')(O_{\epsilon}\cap K)<\epsilon,
\end{equation}
for all $B' \in U_{\epsilon}$ and $C' \in V_{\epsilon}$.
\end{Prop}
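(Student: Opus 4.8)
The plan is to write the intersection mass near $\triangle$ as a difference of two quantities that \emph{are} continuous in the pair of currents — the total intersection mass over a compact part of $X$, and the intersection mass that stays a definite angle away from $\triangle$ — and to exploit that this difference is already small at $(B,C)$ itself once the angular width is small. The guiding principle is that continuity of $I(\cdot,\cdot)$ may never be invoked directly on a set meeting $\triangle$: that is exactly where $\mathcal{F}_1,\mathcal{F}_2$ fail to be transverse and where, by Theorem~\ref{thm: cont.I(.,.)} applied to $f\equiv 1$, no such continuity holds. Concretely, let $\pi\colon\mathbb{P}(X)\oplus\mathbb{P}(X)\to X$ be the projection forgetting both tangent lines (so $\pi$ extends $P$ and $\pi(K)$ is compact), fix a compact $S\subset X$ with $\pi(K)\subset S$, and for $\eta>0$ set
$$O_\eta:=\{(x,\lambda_1,\lambda_2)\in\mathcal{I}_1(X):\angle(\lambda_1,\lambda_2)<\eta\},\qquad A_\eta:=\pi^{-1}(S)\cap\{\angle(\lambda_1,\lambda_2)\ge\eta\}.$$
Since $\pi$ has compact fibres, $\pi^{-1}(S)$ is compact, and $\{\angle\ge\eta\}$ is closed and disjoint from $\triangle$, so $A_\eta$ is a compact subset of $\mathcal{I}_1(X)$. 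With $i_S(C_1,C_2):=I(C_1,C_2)(\pi^{-1}(S)\cap\mathcal{I}_1(X))$, every point of $O_\eta\cap K$ lies over $\pi(K)\subset S$ with angle $<\eta$, hence
$$I(C',B')(O_\eta\cap K)\ \le\ i_S(C',B')-I(C',B')(A_\eta)\ =:\ g_\eta(C',B').$$
Thus it suffices to produce $\eta>0$ and neighbourhoods $U_\epsilon\ni C$, $V_\epsilon\ni B$ on which $g_\eta<\epsilon$, taking $O_\epsilon:=O_\eta$.

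The argument will then rest on two continuity inputs. First, $i_S$ is finite and continuous on $\mathcal{C}\times\mathcal{C}$ — finiteness from the local bound $i_S\le c(S)\,\ell(\cdot)\,\ell(\cdot)$, continuity from Bonahon's continuity of the intersection number restricted to a compact subset of $X$ \cite[Prop.~4.5]{Bon.gc.3}. Second, I claim $I(C_1',C_2')|_{A_\eta}\to I(C_1,C_2)|_{A_\eta}$ weakly whenever $C_i'\to C_i$ in $C_c^*(T_1(X))$: on the compact set $A_\eta$, which lies over a compact base and is bounded away from $\triangle$, the foliations $\mathcal{F}_1,\mathcal{F}_2$ are uniformly transverse, so one covers $A_\eta$ by finitely many product boxes in which $I(C_1,C_2)$ is the product of a transverse measure of $C_1$ with one of $C_2$, and uses that these transverse measures converge weakly (masses included) and that a product of weakly convergent measures converges weakly; a partition of unity yields the claim. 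This is the part of Bonahon's analysis \cite{Bon.gc.3}\cite{Bon.gc} that never touches the diagonal, so it is unaffected by the cusps and does not itself use the present proposition.

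To conclude, I would choose $\eta$ small and generic so that $g_\eta(C,B)=I(C,B)(\pi^{-1}(S)\cap O_\eta)<\epsilon/2$ — possible because this decreases to $0$ as $\eta\downarrow 0$, the intersection of these sets being contained in $\triangle$, which the finite measure $I(C,B)|_{\pi^{-1}(S)}$ does not charge — and so that the topological boundary of $A_\eta$, which lies in $\pi^{-1}(\partial S)\cup(\pi^{-1}(S)\cap\{\angle=\eta\})$, carries no $I(C,B)$-mass (shrinking or enlarging $S$ within a one-parameter family if needed, which is harmless since only countably many levels are bad). Then $A_\eta$ is an $I(C,B)$-continuity set, so the second input gives $I(C',B')(A_\eta)\to I(C,B)(A_\eta)$, and together with the first input $g_\eta(C',B')\to g_\eta(C,B)<\epsilon/2$. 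Hence $g_\eta<\epsilon$ on a suitable product of neighbourhoods $U_\epsilon\times V_\epsilon$, and there $I(C',B')(O_\epsilon\cap K)\le g_\eta(C',B')<\epsilon$, as required.

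The hard part is the second continuity input, and the discipline of using only it (legitimate away from $\triangle$) rather than a full continuity statement for $I$, which simply does not exist in the cuspidal setting — this tension is exactly what the proposition is meant to absorb, and it is also why this proposition, rather than a direct appeal to continuity, is what is needed to prove Theorem~\ref{thm: cont.I(.,.)}. The remaining ingredients (the reduction, the identity $g_\eta=i_S-I(\cdot)(A_\eta)$, continuity of measures from above, and the generic choices of $S$ and $\eta$) are routine.
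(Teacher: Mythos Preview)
Your argument is correct, but it takes a genuinely different route from the paper's. The paper constructs $O_\epsilon$ explicitly as a union $\bigcup_i (b_i\oplus b_i)\cap\mathcal{I}_1(X)$ of products of small $H$-boxes covering the image of $K$ in $\mathbb{P}(X)$, chosen so that each $b_i$ either has small $B$- or $C$-mass, or contains a common atom (a closed geodesic) of $B$ and $C$ with small complementary mass. The first two types contribute little for obvious reasons; the atom case is handled by passing to the cyclic cover and invoking Lemma~\ref{lemma: int.arc.collar} to bound the intersection number of two arcs in a collar by the sum of their winding numbers, which gives a summable contribution. You instead take $O_\epsilon$ to be the thin angular neighbourhood $\{\angle(\lambda_1,\lambda_2)<\eta\}$ and write the mass there as $i_S-I(\cdot,\cdot)(A_\eta)$, then argue each term is continuous in the pair of currents --- the first by Bonahon's continuity of the intersection number over a compact part of the surface, the second by the product structure away from $\triangle$ --- and that their difference vanishes at $(C,B)$ as $\eta\to 0$.

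Your approach is cleaner and more conceptual, but it outsources the real difficulty to Bonahon: the proof of \cite[Prop.~4.5]{Bon.gc.3} is itself a box decomposition handling the diagonal, essentially the same machinery the paper re-does here. So you are not circular, but you are invoking as a black box exactly the argument the paper writes out. The paper's version is self-contained (making Theorem~\ref{thm: cont.I(.,.)} independent of Bonahon's compact-case result) and isolates the atom case explicitly via Lemma~\ref{lemma: int.arc.collar}, which is the genuine new ingredient. Your version is shorter and makes transparent that the only obstruction to continuity sits at the diagonal; it also yields a more canonical $O_\epsilon$.
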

\begin{proof}
  The projection of $K$ to $X$ is a compact subset of $X$. Let $K'$ be the set of all tangent lines to this compact subset of $X$. Then $K'$ is a compact subset of $\mathbb{P}(X)$. Consider a cover of $K'$ with compact $H-boxes$ $K_1,\dots,K_t$, chosen sufficiently small so that each leaf within these boxes is a simple geodesic arc, and any two leaves in the same $H-box$ intersect at most once. By subdividing the edges of each $K_s$, we can split it into smaller compact $H-boxes$ with disjoint interiors, denoted $b_1,\dots,b_m$ (for simplicity, we remove the dependence of the indices on $s$). Each box $b_i$ is chosen to be of one of the following types:
  \begin{enumerate}
      \item $B(b_i)<\delta$, or 
      \item $C(b_i)<\delta$, or
      \item  there is a closed geodesic $\gamma$ with transverse measure $\geq \delta$ with respect to both $B$ and $C$ (an atom for both $B$ and $C$) which passes through $b_i$ once. Moreover, $B(b_i \backslash \gamma)$ and  $C(b_i \backslash \gamma)$ are  $<\delta'$.
  \end{enumerate}

  We will choose $\delta, \delta'>0$ small enough later. 

  Here, by $A(b)$, for a geodesic current $A$ and an $H-box$ $b$, we mean the transverse measure of $b$ with respect to $A$. The construction of the subdivision implies that $A(b_1)+\dots+A(b_m)=A(K_s)$, provided that the boundaries of the boxes $b_i$ have zero $A-$measure.

The reason we can construct such a covering of $K_s$ by $H-boxes$ is as follows. Since $B$ and $C$ are geodesic currents with finite total mass, they have at most finitely many atoms of transverse measure $\geq \delta$, named $\delta-$atom, each supported on a closed geodesic. We begin by splitting the compact set $K_s$ into sufficiently small compact $H-boxes$ so that each box contains at most one $\delta-$atomic geodesic of $B$ or $C$. For any $H-box$ that contains a $\delta-$atom for both $B$ and $C$, we can further subdivide it so that the box containing the $\delta-$atomic geodesic has the complement of measure $<\delta'$ with respect to both $B$ and $C$. Next, for any $H-box$ that does not contain a $\delta-$atom of $B$ (or of $C$), we subdivide it as needed into smaller $H-boxes$, each of which has measure $<\delta$ with respect to $B$ (or $C$, respectively). In this way, we obtain a finite splitting of $K_s$ by $H-boxes$, each of which satisfies one of the three desired types.
  
  For each $K_s$, define 
  $$
  O^s_{\epsilon}=(b_1\oplus b_1 \cup \dots \cup b_m \oplus b_m)\cap \mathcal{I}_1(X),
  $$ 
  which is a neighborhood of the diagonal $\Delta$. Let 
  $$
  O_{\epsilon}:=O^1_{\epsilon}\cup \dots \cup O^t_{\epsilon}.
  $$
  Now, our goal is to prove that $O_{\epsilon}$ has small total mass with respect to the intersection measure of $B$ and $C$, and any pair of geodesic currents in a small neighborhood of $B$ and $C$. It suffices to prove this for each individual $O^s_{\epsilon}$. 
  
   Choose small neighborhoods $U_{\epsilon}$ and $V_{\epsilon}$ of $B$ and $C$ in $\mathcal{C}$, respectively, such that for all indices $s$ we have:
   \begin{itemize}
       \item Constraint $1$: $B'(K_s)<B(K_s)+\delta, \, \, \, \, C'(K_s)<C(K_s)+\delta$
\item Constraint $2$: $B'(b_i)<B(b_i)+\delta, \, \, \, \, C'(b_i)<C(b_i)+\delta$ for $i=1, \dots, m$
       \end{itemize}
       
   for every $B' \in U_{\epsilon}$ and $C' \in V_{\epsilon}$. We will set the third constraint later and then we will explain why these constraints are achievable.
   
  Note that for any geodesic currents $A_1,A_2$, and any $H-boxes$ $e_1,e_2$, we have $I(A_1,A_2)(e_1\oplus e_2)\leq A_1(e_1)\cdot A_2(e_2)$.
  
   For $H-boxes$ of type $1$ and $2$ we have either $B'(b_i)\leq B(b_i)+\delta<2\delta$ or $C'(b_i)\leq 2\delta$. Therefore, we can see that the sum of the contribution of the sets $(b_i \oplus b_i) \cap \mathcal{I}_1(X)$, for $H-$boxes $b_i$ of type $1$ and $2$, to $I(B',C')(O^s_{\epsilon} \cap K)$ is 
  \begin{align}  
  <2\delta \sum_{j} B'(b_j)+ 2\delta \sum_j C'(b_j) \leq 2\delta(B'(K_s)+C'(K_s)) \nonumber \\<2\delta(B(K_s)+C(K_s)+2\delta).
  \end{align}
  
  Choosing $\delta>0$ small enough makes this contribution $<\epsilon/2$. Note that the cover of compact set $K'$ by $H-boxes$ $K_s$ (and $t$, the number of these boxes) is fixed and does not depend on the parameters $\delta$ and $\delta'$.
  
  We now show that the contribution of boxes $b_i \oplus b_i$ is also small for all boxes $b_i$ of type $3$. Thus, from now on, only boxes $b_i$ of the third type are considered.
  
  Assume that the closed geodesic $\gamma \subset \mathbb{P}(X)$ is the common atom of $B$ and $C$ in $b_i$. Consider $\widetilde{X}=\mathbb{H}/\Gamma_{\gamma}$ where $\Gamma_{\gamma}$ is a subgroup of $\Gamma$ generated by a hyperbolic transformation corresponding to $\gamma$. This is a covering of $X$ homeomorphic to a cylinder with $\gamma$ as its core curve. Let $\widetilde{b_i}$ be a lift of $b_i$ to $\widetilde{X}$ (the lift that intersects $\gamma$). 
  Define $G_p^i \subset \widetilde{b_i}$ as the set of all tangent lines to geodesics that pass through $b_i$ exactly $p$ times. Note that the tangent lines to $\gamma$ do not belong to any of the sets $G_p^i$ for finite $p$, since $\gamma$ passes through $\widetilde{b_i}$ infinitely many times. In fact, all such tangent lines are contained in $G_{\infty}^i$.

  According to Lemma~\ref{lemma: int.arc.collar}, the number of intersection points within $\widetilde{b_i}$ between two geodesics in $G_p^i$ and $G_q^i$ is at most $p+q+2$. Although a priori two geodesics that pass through $\widetilde{b_i}$ exactly $p$ and $q$ times, respectively, could intersect up to $pq$ times, the lemma shows that their actual number of intersections is bounded above by $p+q+2$. 

Each geodesic current on $X$ admits a natural lift to a geodesic current on $\widetilde{X}$. We can decompose the set $G_p^i$ into $p$ disjoint subsets, each corresponding to one of the $p$ distinct times a geodesic enters $\widetilde{b_i}$. Since the total transverse measure of $G_p^i$ with respect to $B'$ is $B'(G_p^i)$, each of these subsets has measure $B'(G_p^i)/p$ with respect to $B'$. Similarly, we decompose $G_q^i$ into $q$ disjoint subsets, each with transverse measure $C'(G_q^i)/q$ with respect to $C'$.

  As a result, the contribution of the intersection points between the geodesics in $G_p^i$ and $G_q^i$ to $I(B',C')$ is 
  
  \begin{equation}
  I(B',C')(G_p^i \oplus G_q^i)\leq (p+q+2)\frac{B'(G_p^i)}{p}\frac{C'(G_q^i)}{q}.
  \end{equation}  
  
  Let $k \in \mathbb{N}$ be a fixed sufficiently large constant. We assume that the neighborhoods $U_{\epsilon}$ and $V_{\epsilon}$ are small enough such that for all indices $s$ we have
\begin{itemize}
    \item Constraint $3$: $B'(\cup_{p=1}^k G_p^i)<B(\cup_{p=1}^k G_p^i)+\delta,$ and \\
    $C'(\cup_{p=1}^k G_p^i)< C(\cup_{p=1}^k G_p^i)+\delta,$
\end{itemize}
  for every $B' \in U_{\epsilon}$ and $C'\in V_{\epsilon}$.
   
   These constraints on the neighborhoods $U_{\epsilon}, V_{\epsilon}$ are achievable because the boundaries of the sets $G_p^i$ lie within $\partial b_i$, and we can choose the boxes $b_i$ and $K_s$ so that their boundaries avoid all closed geodesics, since 
$X$ has only countably many closed geodesics. Consequently, the boundaries of the sets $G_p^i, b_i, K_s$ avoid the atoms of the geodesic currents $B$ and $C$, which are supported on closed geodesics. Thus, by \cite[Lemma 4.3]{Bon.gc.3}, the boundaries $\partial \cup_{p=1}^k G_p^i, \partial b_i, \partial K_s$ have zero measure with respect to any geodesic current, ensuring the continuity of the measures under perturbation.

Moreover, from Constraint $3$, we can conclude that $B'(\cup_{p=1}^{k} G_p^i), C'(\cup_{p=1}^{k} G_p^i) \leq \delta'+\delta$, since $\cup_{p=1}^{k} G_p^i$ is a subset of $b_i\backslash \gamma$ that has measure $<\delta'$ with respect to both $B$ and $C$ (this is the constraint for type $3$ $H-$boxes).
  
  The contribution of $b_i \oplus b_i$ to $I(B',C')$ is:
  
   \begin{align}
  \sum_{1 \leq p,q \leq \infty} \textit{intersection number arises from } G_p^i, G_q^i \nonumber \\
  =\sum_{p\leq k} B'(G_p^i)C'(b_i)+ \sum_{q \leq k} C'(G_q^i)B'(b_i)\nonumber \\
  +\sum_{k<p,q<\infty} \frac{p+q+2}{pq} B'(G_p^i)C'(G_q^i)
  + \sum_{k<p, q=\infty} \frac{B'(G_p^i)}{p}C'(\gamma)\nonumber \\ + \sum_{k<q,p=\infty} \frac{C'(G_q^i)}{q}B'(\gamma).\label{inequ: lemma.diagonal.int.small}     
\end{align}  
As we explained, from Constraint $3$ we conclude that the first two terms in (\ref{inequ: lemma.diagonal.int.small}) are $\leq (\delta+\delta')(C'(b_i)+B'(b_i))$. On the other hand, for $p,q>k$ we have:
  \begin{align}\label{ineq: bound.p.q.k}
  \frac{p+q+2}{pq}<\frac{3}{k}, \, \, \, \, \frac{B'(G_p^i)}{p} \leq \frac{B'(b_i)}{k}, \, \, \, \, \frac{C'(G_q^i)}{q} \leq \frac{C'(b_i)}{k}.
  \end{align}
  From (\ref{ineq: bound.p.q.k}), we can control the other terms in (\ref{inequ: lemma.diagonal.int.small}) and we can see that the contribution of $b_i \oplus b_i$ to $I(B',C')$ is 
  
  \begin{align} 
      \leq (\delta+\delta')(C'(b_i)+B'(b_i))+ \frac{5}{k} B'(b_i)C'(b_i) 
      \label{equ: bnd.k}
  \end{align}
  Therefore, the total contribution of all $b_i \oplus b_i$ corresponding to the type 3 boxes to $I(B', C')$ is

\begin{align}
\leq (\delta+\delta')(\sum_i C'(b_i)+B'(b_i))+\frac{5}{k}\sum_i B'(b_i)C'(b_i) \nonumber \\
\leq (\delta+\delta')( C'(K_s)+B'(K_s))+\frac{5}{k} B'(K_s)C'(K_s) \nonumber \\
\leq (\delta+\delta')( C(K_s)+B(K_s)+2\delta)+\frac{5}{k}( B(K_s)+\delta)(C(K_s)+\delta). 
\end{align} 
Choosing $\delta'$ and $\delta$ small enough and $k$ large enough makes this contribution $<\epsilon/2$, thus we have $I(C',B')(O^s_{\epsilon}\cap K)<\epsilon$, as required.
\end{proof}

\emph{Proof of Theorem \ref{thm: cont.I(.,.)}.} Given a function $f$ and a compact set $K\subset \mathbb{P}(X)\oplus\mathbb{P}(X)$ as described in the statement of the theorem, for any $\epsilon>0$, consider $U_{\epsilon}, V_{\epsilon}, O_{\epsilon}$ from Proposition \ref{prop: neighbor.diagonal}. Assume that $f$ is bounded above by $b$. Then from Proposition \ref{prop: neighbor.diagonal} we have

\begin{equation}
|\int_{O_{\epsilon}\cap K}f \, dI(B_n,C_n)-\int_{O_{\epsilon}\cap K}f\,dI(B,C)| \leq 2b\epsilon.
\end{equation}

Moreover, the convergence of $B_n$'s and $C_n$'s implies that 

\begin{equation}
|\int_{E_{\epsilon}}f \, dI(B_n,C_n)-\int_{E_{\epsilon}}f\,dI(B,C)| \leq \epsilon, 
\end{equation}

when $n$ is large enough, for the compact set $E_{\epsilon}=K\backslash O_{\epsilon}$.
Combining these two inequalities implies 
\begin{equation}
\int_{\mathcal{I}_1(X)}f \, dI(B_n,C_n) \to \int_{\mathcal{I}_1(X)}f\,dI(B,C),
\end{equation}

when $n \to \infty$, as required. 
\qed

 \section{Excursion}

  In this section, we find an upper bound on the number of self-intersection points of $\gamma_T$ in a neighborhood of the cusps. As we explained before, bounding this number is necessary to complete the second part of the proof and show that there is no escape of mass to infinity in the converging sequence in Theorem \ref{thm: weaker.limit}.
  
  Let $p$ be a cusp of $X$ and $N_p(r)$ the horoball neighborhood of $p$ with horocycle boundary of length $r$, in other words, $\ell(\partial N_p(r))=r$.

  Let the \textit{winding number} of a geodesic arc $\alpha$, whose endpoints lie on $\partial N_p(r)$, be the number of times $\alpha$ wraps around the cusp. More precisely, consider the orthogonal projection of $\alpha$ onto $\partial N_p(r)$, which traces out a curve of length $\ell$. The winding number of $\alpha$ around the cusp is then defined as $ \lfloor \ell/r \rfloor$.

  Define an \emph{$n-$excursion} (or \emph{$n-$exc}) to be a geodesic arc whose endpoints are on $\partial N_p(r)$ and its winding number around the cusp is $n$.
  
  The measure $P_*I(\gamma_T,\gamma_T)(N_p(r))$ is the number of self-intersection points of $\gamma_T$ inside $N_p(r)$. We have:

  \begin{Prop}\label{prop: num.int.cusp}
 There exists a constant $c_r>0$ such that the contribution to the self-intersection number $i(\gamma_T,\gamma_T)$ from the points inside $N_p(r)$ is bounded above by $c_re^{2T}$. In other words, we have
 \begin{equation}
 P_*I(\gamma_T,\gamma_T)(N_P(r)) \leq c_r e^{2T},
 \end{equation}
 for a constant $c_r$ which tends to $0$ as $r \to 0$.
\end{Prop}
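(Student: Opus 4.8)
The plan is to bound the self-intersection number of $\gamma_T$ inside $N_p(r)$ by decomposing the piece of $\gamma_T$ in the cuspidal neighborhood into excursions and controlling the intersections according to their winding numbers. First, observe that any self-intersection point of $\gamma_T$ inside $N_p(r)$ must lie on two sub-arcs of the closed geodesics in $\mathcal{G}_T$ that enter and leave $N_p(r)$; each such sub-arc is an $n$-excursion for some $n \geq 1$ (after possibly passing to a slightly larger horoball to make this clean, or using a collar-neighborhood model of the cusp as in Lemma \ref{lemma: int.arc.collar}). Two excursions with winding numbers $n$ and $m$ intersect inside $N_p(r)$ in at most $O(n+m+1)$ points, by the same elementary argument as Lemma \ref{lemma: int.arc.collar} applied to the universal cover of the cusp neighborhood (an infinite half-strip with a parabolic deck transformation). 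Hence
$$
P_*I(\gamma_T,\gamma_T)(N_p(r)) \leq c\sum_{n,m\geq 1}(n+m+1)\, E_n(\gamma_T)E_m(\gamma_T),
$$
for an absolute constant $c>0$, where $E_n(\gamma_T)$ counts the $n$-excursions of $\gamma_T$ (summed over $\gamma\in\mathcal{G}_T$).

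Next I would feed in Proposition \ref{prop: n.exc}, which gives $E_n(\gamma_T) \leq (c'/n^2)\ell(\gamma_T)$ — more precisely, I should rescale: since $N_p(r)$ has boundary of length $r$ rather than $1$, an $n$-excursion of the length-$r$ horoball corresponds (up to bounded multiplicity and a shift in the winding index depending on $r$) to an $n$-excursion in the normalized picture, so one still gets $E_n^{(r)}(\gamma_T) \leq (c_r'/n^2)\ell(\gamma_T)$ with $c_r'$ small when $r$ is small, because a short horocycle means arcs must wind many times before they can intersect and the total length available for excursions scales with $r$. Then
$$
P_*I(\gamma_T,\gamma_T)(N_p(r)) \leq c\,\ell(\gamma_T)^2\sum_{n,m\geq 1}\frac{n+m+1}{n^2 m^2}\cdot(c_r')^2,
$$
and the double sum $\sum_{n,m}(n+m+1)/(n^2m^2)$ converges. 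Finally, using the prime geodesic theorem / Margulis asymptotics $\ell(\gamma_T) = \sum_{\gamma\in\mathcal{G}_T}\ell(\gamma) \sim e^T$ (which is also what underlies Corollary \ref{Cor: main}), we conclude $P_*I(\gamma_T,\gamma_T)(N_p(r)) \leq c_r e^{2T}$ with $c_r \to 0$ as $r\to 0$.

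The main obstacle I anticipate is the bookkeeping at the boundary $\partial N_p(r)$ and getting the constant $c_r$ to genuinely tend to $0$ as $r\to 0$, rather than merely being finite. One has to be careful that: (i) a single closed geodesic may have many excursions, and arcs that merely pass near the cusp without fully entering must be handled; (ii) the dependence of the excursion count on $r$ must be tracked honestly — the key point is that for small $r$ the minimum winding number $n$ of any excursion that reaches depth $r$ grows like $\sim 1/r$ (or at least, the short excursions disappear), so the tail $\sum_{n \gtrsim 1/r} 1/n^2 \sim r$ supplies the smallness; and (iii) intersections between an excursion and a transversal arc crossing $\partial N_p(r)$ should be absorbed into the $+1$ term. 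Modulo this, everything else reduces to the already-established Proposition \ref{prop: n.exc}, Lemma \ref{lemma: int.arc.collar}, and the standard equidistribution/counting asymptotics for closed geodesics.
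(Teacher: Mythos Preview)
Your overall architecture is exactly the paper's: decompose the cusp intersections by excursion winding numbers, plug in the $E_n(\gamma_T)\ll e^T/n^2$ bound, and sum. The gap is in the pairwise intersection estimate. You invoke the collar Lemma \ref{lemma: int.arc.collar} to get that an $n$-excursion and an $m$-excursion meet in $O(n+m+1)$ points, and then assert that $\sum_{n,m\geq 1}(n+m+1)/(n^2m^2)$ converges. It does not: the term $n/(n^2m^2)=1/(nm^2)$ already gives $\bigl(\sum_n 1/n\bigr)\bigl(\sum_m 1/m^2\bigr)=\infty$. Restricting to $n,m\geq n_r$ (your correct observation that only deep-winding arcs reach $N_p(r)$) does not save this, since $\sum_{n\geq n_r}1/n$ is still divergent for every finite $n_r$.

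The fix, which is what the paper does, is to use the sharper cusp-specific bound: an $n$-exc and an $m$-exc in a horoball neighborhood intersect in at most $2\min(n,m)+2$ points (this is Lemma \ref{lem: n,m,exc.int}, proved by lifting to the half-strip model and noting that the shorter excursion stays below height $\sim\min(n,m)$ while the longer one has only two vertical legs crossing that height). With $\min(n,m)$ in place of $n+m$, the double sum becomes
\[
\sum_{i,j\geq n_r}\frac{\min(i,j)}{i^2j^2}\;\leq\;2\sum_{i\geq n_r}\frac{1}{i^2}\sum_{j=n_r}^{i}\frac{1}{j}\;\leq\;2\sum_{i\geq n_r}\frac{\log i}{i^2},
\]
which is finite and, being the tail of a convergent series, tends to $0$ as $n_r\to\infty$ (i.e.\ as $r\to 0$). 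That is precisely what delivers both the $e^{2T}$ bound and the smallness of $c_r$. Your side remark about $c_r'$ being small ``because the total length available for excursions scales with $r$'' is not the right mechanism; the smallness really comes from the tail of $\sum\log i/i^2$, not from any $r$-dependence in the excursion count itself.
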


In order to prove Proposition \ref{prop: num.int.cusp}, we use the following results. Let $B_r(p)$ denote the disk of radius $r>0$ centered at a point $p$. 
  
  \begin{Lemma} \label{lemma: mrk.pnt}
  The number of geodesics with endpoints at a point $Q \in X$, and length $\leq T$ is $\leq Ce^T$ where $C>0$ is a constant only depending on $Q$.
  \end{Lemma}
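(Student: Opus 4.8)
The plan is to count geodesics through a fixed point $Q$ by lifting to the universal cover $\mathbb{H}$. Fix a lift $\tilde Q \in \mathbb{H}$ of $Q$. A geodesic arc on $X$ starting and ending at $Q$ lifts to a geodesic arc in $\mathbb{H}$ from $\tilde Q$ to $g\tilde Q$ for some $g \in \Gamma$, and distinct such arcs of the given type correspond to distinct nontrivial elements $g \in \Gamma$ with $d_{\mathbb{H}}(\tilde Q, g\tilde Q) \leq T$ (the length of the geodesic arc equals the distance $d_{\mathbb{H}}(\tilde Q, g\tilde Q)$, since the geodesic arc is the distance-minimizing representative in its homotopy class rel endpoints). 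So it suffices to bound $\#\{g \in \Gamma : d_{\mathbb{H}}(\tilde Q, g\tilde Q) \leq T\}$.

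The key step is a standard packing argument. The orbit points $g\tilde Q$, for $g$ ranging over $\Gamma$, are separated: there is a constant $\rho > 0$ (half the length of the shortest closed geodesic through $Q$, or more simply half the injectivity radius at $Q$, which is positive since $X$ is a hyperbolic surface — this is where the dependence on $Q$ enters) such that the balls $B_{\rho}(g\tilde Q)$ are pairwise disjoint. Each such ball, when $d_{\mathbb{H}}(\tilde Q, g\tilde Q) \leq T$, is contained in the ball $B_{T+\rho}(\tilde Q)$. Hence the number of such $g$ is at most
$$
\frac{\operatorname{area}(B_{T+\rho}(\tilde Q))}{\operatorname{area}(B_{\rho}(\tilde Q))} = \frac{2\pi(\cosh(T+\rho) - 1)}{2\pi(\cosh\rho - 1)} \leq C e^{T},
$$
where $C$ depends only on $\rho$, hence only on $Q$. (I would note that when $Q$ lies in a cusp neighborhood the injectivity radius is small, so $C$ blows up, but it remains finite and depends only on $Q$, which is all that is claimed; if uniformity over $Q$ in a compact set were needed, one takes the infimum of injectivity radii over that set.)

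The main obstacle — really the only point requiring care — is the bookkeeping that makes the correspondence "geodesic arcs through $Q$ of length $\leq T$" $\leftrightarrow$ "elements $g \in \Gamma$ with $d_{\mathbb{H}}(\tilde Q, g\tilde Q) \leq T$" precise: one must check that each homotopy class rel endpoints contributes at most one geodesic arc (true, by uniqueness of geodesics in $\mathbb{H}$ between two points), that the minimizing arc has length exactly $d_{\mathbb{H}}(\tilde Q, g\tilde Q)$, and that the trivial element (the constant arc) is excluded or contributes nothing. None of this is deep; it is just the translation between the surface and its universal cover. The exponential growth of the area of hyperbolic balls then does the rest.
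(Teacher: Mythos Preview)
Your proposal is correct and follows essentially the same approach as the paper: lift to $\mathbb{H}$, identify geodesic arcs through $Q$ with orbit points $g\tilde Q$ in the ball of radius $T$, and bound the orbit count by a packing argument using disjoint $\rho$-balls (the paper phrases this as preimages of an embedded $\epsilon$-disk around $Q$) inside a ball of radius $T+\rho$, then invoke the exponential growth of hyperbolic area. Your write-up is in fact more detailed than the paper's on the correspondence and the area computation, but the idea is identical.
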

  \begin{proof}
  Let $Q_0 \in \mathbb{H}$ be a preimage of $Q\in X$. Geodesics with endpoints at $Q$ and length $\leq T$ correspond to the $\pi_1$ (fundamental group) orbit of $Q_0$ in the disk of radius $T$.
  Consider the preimages (in $\mathbb{H}$) of a small embedded disk of radius $\epsilon$ around $Q$. These are separated disks inside a disk of radius $T+2\epsilon$ centered at $Q_0$. Therefore, the number of these orbit points is $<vol(B_{T+2\epsilon}(Q_0))/vol(B_{\epsilon}(Q_0))<Ce^T$ for a constant $C>0$ depending on $\epsilon$ (which depends only on $Q$).   
  \end{proof}

  \begin{Lemma}\label{lem: n,m,exc.int}
  The intersection number between an $n-$exc and an $m-$exc is $\leq 2\min(n,m)+2$.
      
  \end{Lemma}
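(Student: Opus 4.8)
The plan is to mimic the proof of Lemma~\ref{lemma: int.arc.collar}, with the collar of a short geodesic replaced by a horoball neighborhood of the cusp $p$. First I would conjugate in $\operatorname{Isom}^{+}(\mathbb{H})$ so that $p$ sits at $\infty$, its stabilizer in $\Gamma$ is generated by the parabolic $\tau\colon z\mapsto z+1$, and $N_p(r)$ lifts to a horoball $\mathcal{H}=\{\operatorname{Im}z\ge h\}$ whose boundary $\{\operatorname{Im}z=h\}$ covers $\partial N_p(r)$. For the relevant $r$ the neighborhood $N_p(r)$ is embedded, so the number of intersection points inside $N_p(r)$ of two arcs $\alpha,\beta\subset N_p(r)$ equals the number of integers $k$ for which a fixed lift $\tilde\alpha\subset\mathcal{H}$ meets $\tau^{k}\tilde\beta$. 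Unwinding the definition of the winding number in this normalization, a geodesic $n$-exc lifts to a geodesic segment running from a point $(a,h)$ of $\partial\mathcal{H}$ to a point $(a+d,h)$ whose horizontal displacement $d$ (measured in units of the deck translation) satisfies $d<n+1$; it equals $n$ up to a fractional part coming from the position of the endpoints on the horocycle.

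Next, since geodesic arcs minimize the number of transverse intersection points in their homotopy class rel endpoints, it suffices to bound that number for convenient representatives. Keeping endpoints fixed, I would homotope $\tilde\alpha$ inside the simply connected set $\mathcal{H}$ (so any such homotopy is admissible) to the ``staple'' $S_\alpha$ consisting of the vertical segment $\{a\}\times[h,M_\alpha]$, the horizontal segment $[a,a+d_\alpha]\times\{M_\alpha\}$, and the vertical segment $\{a+d_\alpha\}\times[h,M_\alpha]$; similarly $\tilde\beta$ is replaced by a staple $S_\beta$ of width $d_\beta$ and height $M_\beta$. Assume $n\le m$ and choose $M_\beta>M_\alpha>h$ with all the horizontal coordinates in general position. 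Then I would enumerate the possible intersections of $S_\alpha$ with a translate $\tau^{k}S_\beta$: the two horizontal tops lie at the distinct heights $M_\alpha\ne M_\beta$ and never meet; the vertical sides of $S_\alpha$ reach only height $M_\alpha<M_\beta$, so, after the generic shift, they meet neither the vertical sides nor the top of $\tau^{k}S_\beta$. Consequently every intersection point lies on the horizontal top of $S_\alpha$ — a segment of width $d_\alpha<n+1$ — and on one of the two vertical sides of $\tau^{k}S_\beta$. For each of those two sides separately, the integers $k$ for which it meets the top of $S_\alpha$ form a set contained in a real interval of length $d_\alpha<n+1$, hence number at most $n+1$; adding the contributions of the two sides gives at most $2(n+1)=2\min(n,m)+2$ intersection points, which is the assertion.

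The point that needs care is identifying which quantity the winding number actually controls: it is the horizontal displacement $d$ of the two endpoints of the lift on the horocycle $\partial\mathcal{H}$, and this is the quantity bounded by $n+1$. One should resist arguing instead with the separation of the two feet of the lifted geodesic on $\partial\mathbb{H}$, because in a thin horoball those feet can be as far apart as $\sim\sqrt{2}\,n$, which would only produce a bound of order $\sqrt{2}\,n$. Using the piecewise-geodesic staple representatives, as above, bypasses this, and the generous additive constant $+2$ in the statement comfortably absorbs the fractional ambiguity in the winding-number convention.
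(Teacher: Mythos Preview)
Your argument is correct and follows essentially the same route as the paper: lift to the horoball covered by $\{\operatorname{Im}z\ge h\}$ with deck translation $z\mapsto z+1$, use that geodesic arcs minimize intersections rel endpoints, and replace the lifts by convenient piecewise representatives whose intersections are governed by the horizontal displacement $d_\alpha\in[n,n+1)$ of the shorter arc. The only cosmetic difference is that the paper keeps the $n$-exc as a half-circle confined to $\{1\le y\le n+2\}$ and turns only the $m$-exc into a three-piece ``staple'' at height $n+3$, whereas you turn both into staples at heights $M_\alpha<M_\beta$; either choice gives the bound $2(n+1)$. One small wording issue: the intersection number downstairs is the \emph{total} number of intersection points of $\tilde\alpha$ with $\bigcup_k\tau^k\tilde\beta$, not merely the number of $k$ for which they meet---but your final count (summing over the two vertical sides) computes exactly this total, so the conclusion stands.
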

  \begin{proof}
     Consider the universal cover $A=(-\infty, \infty) \times [1,\infty)$ for the cuspidal neighborhood with a deck transformation $(x,y) \to (x+1,y)$. A preimage of a $k-$exc is an arc in $A$ whose endpoints are on the line $y=1$ with distance $d \in [k,k+1)$ from each other. Geodesic arcs attain the minimum intersection number among the homotopic curves with fixed endpoints. Therefore, to bound the intersection number between an $n-$exc and an $m-$exc, it is sufficient to bound the number of the intersection points between two representative curves in their homotopy classes (with fixed endpoints). Without loss of generality, assume that $n \leq m$. Consider a half-circle homotopic to the $n-$exc that is inside $(-\infty,\infty)\times[1,n+2] \subset A$ and an arc homotopic to the $m-$exc which is constructed from three subarcs as follows. The middle subarc is a segment inside $(-\infty,\infty)\times\{ n+3\}$ and the two ends are vertical lines from $y=1$ to $y=n+3$. Then, we can see that the image of each vertical line intersects the image of the arc homotopic to the $n-$exc in $\leq n+1$ points. Therefore, the intersection number is $\leq 2n+2$, as required.  
  \end{proof}
  
  \begin{Lemma}\label{lem: excursion.length}
  
  Let $\eta_n$ be an $n-$exc in $N_p(r)$. Then we have
  \begin{equation}
  \ell(\eta_n) \geq 2\log n+c
  \end{equation}
  for a constant $c \in \mathbb{R}$ only depending on $r$.
  \end{Lemma}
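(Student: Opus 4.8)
The plan is to reduce the statement to an elementary computation in the universal cover of the cuspidal neighborhood, exactly in the spirit of the proof of Lemma~\ref{lem: n,m,exc.int}: the length of an $n$-exc is essentially the hyperbolic distance between two points sitting at horizontal distance $\geq n$ on a fixed horocycle, and this distance is $2\log n + O(1)$.

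Concretely, I would first normalize coordinates so that the cusp $p$ is the one fixed by the parabolic $z \mapsto z+1$; then $N_p(r)$ lifts to the horoball $H_r = \{x+iy : y \geq 1/r\}$, since the boundary horocycle $\{y = 1/r\}/\langle z \mapsto z+1\rangle$ has hyperbolic length $1/(1/r) = r$. Lifting $\eta_n$ to a path contained in $H_r$ gives a geodesic segment $\widetilde{\eta_n}$ with both endpoints on $\partial H_r = \{y = 1/r\}$; by the description of winding number in the cover used in Lemma~\ref{lem: n,m,exc.int}, these endpoints are $(a, 1/r)$ and $(a+w, 1/r)$ with $w \geq n$ (one full wind around the cusp corresponds to the deck translation $x \mapsto x+1$). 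I would also record the elementary fact that the $\mathbb{H}$-geodesic joining two points of equal height $1/r$ is a Euclidean semicircle meeting $\{y = 1/r\}$ only at its two endpoints, hence lies in $H_r$; this confirms simultaneously that $\widetilde{\eta_n}$ is exactly this segment and that it is consistent with the hypothesis $\eta_n \subset N_p(r)$.

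Then the computation is immediate: using $\cosh d_{\mathbb H}(z_1,z_2) = 1 + |z_1 - z_2|^2/(2\,\mathrm{Im}\,z_1\,\mathrm{Im}\,z_2)$ with $z_1 = a + i/r$, $z_2 = (a+w) + i/r$, we get $\ell(\eta_n) = \mathrm{arccosh}(1 + r^2 w^2/2)$. Since $w \geq n$ and $\mathrm{arccosh}(t) = \log(t + \sqrt{t^2-1}) \geq \log t$ for $t \geq 1$, this gives $\ell(\eta_n) \geq \log(r^2 n^2/2) = 2\log n + (2\log r - \log 2)$, so the lemma holds with $c := 2\log r - \log 2$, which depends only on $r$.

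I do not expect a genuine obstacle here; the content is purely geometric bookkeeping. The one place requiring care is the normalization: fixing the cutoff height correctly (boundary-horocycle length $r$ forces the horoball $\{y \geq 1/r\}$) and matching the winding number $n$ with a horizontal displacement $\geq n$ between the lifted endpoints — both already implicit in the setup of Lemma~\ref{lem: n,m,exc.int}. As an alternative to the closed-form distance formula, one can parametrize the semicircular arc by its angle $\theta$, where $ds = d\theta/\sin\theta$, and integrate from $\theta_0 = \arcsin(1/(rR))$ to $\pi - \theta_0$ with $R \geq w/2$ the Euclidean radius; this yields $\ell(\eta_n) = 2\log\cot(\theta_0/2) \geq 2\log(rR) \geq 2\log n + 2\log(r/2)$, the same bound.
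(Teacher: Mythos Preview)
Your argument is correct and is essentially the same as the paper's: lift to the upper half-plane, observe that the two endpoints of the lifted arc lie on the boundary horocycle at horizontal distance $\geq n$ (in your normalization), and read off the length from the standard hyperbolic distance formula. The only cosmetic differences are the choice of normalization (the paper puts the boundary horocycle at height $1$ and gets horizontal displacement $\geq nr$, you put it at height $1/r$ and get displacement $\geq n$) and that you track the explicit constant $c = 2\log r - \log 2$.
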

  \begin{proof}
  Consider a semicircle and a horizontal line $y=1$ as preimages of $\eta_n$ and $\partial N_p(r) $ in $\mathbb{H}$, the upper half-plane, respectively. See Figure \ref{fig: E_n}. Let $z_1, z_2$ be the endpoints of $\eta_n$ on $\partial N_p(r) $ in $\mathbb{H}$. The points $z_1,z_2$ have the same height and the difference between their $x-coordinate$ is $\geq nr$. 
  
  \begin{figure}[h]
      \centering

\tikzset{every picture/.style={line width=0.75pt}} 

\begin{tikzpicture}[x=0.55pt,y=0.55pt,yscale=-1,xscale=1]

\draw    (156.2,175.2) -- (466.2,176.2) ;
\draw  [dash pattern={on 4.5pt off 4.5pt}]  (199,108) -- (388.2,112.2) ;
\draw  [draw opacity=0] (202.55,174.84) .. controls (202.54,174.23) and (202.54,173.63) .. (202.55,173.02) .. controls (203.23,124.74) and (245.07,86.18) .. (295.99,86.9) .. controls (346.65,87.61) and (387.22,126.93) .. (386.97,174.87) -- (294.76,174.32) -- cycle ; \draw   (202.55,174.84) .. controls (202.54,174.23) and (202.54,173.63) .. (202.55,173.02) .. controls (203.23,124.74) and (245.07,86.18) .. (295.99,86.9) .. controls (346.65,87.61) and (387.22,126.93) .. (386.97,174.87) ;  
\draw  [dash pattern={on 0.84pt off 2.51pt}]  (242.2,57.2) -- (244,175) ;
\draw  [dash pattern={on 0.84pt off 2.51pt}]  (260.2,58.2) -- (262,176) ;
\draw  [dash pattern={on 0.84pt off 2.51pt}]  (277.2,58.2) -- (279,176) ;
\draw  [dash pattern={on 0.84pt off 2.51pt}]  (345.2,58.2) -- (347,176) ;
\draw  [dash pattern={on 0.84pt off 2.51pt}]  (330.2,58.2) -- (332,176) ;
\draw [line width=1.5]  [dash pattern={on 1.69pt off 2.76pt}]  (296.1,67.1) -- (317.2,67) ;
\draw  [line width=0.75] [line join = round][line cap = round] (223.67,56.49) .. controls (223.67,44.47) and (249.42,47.82) .. (257.67,47.49) .. controls (258.64,47.45) and (263.67,43.48) .. (263.67,43.49) .. controls (265.17,47.98) and (266.72,50.4) .. (273.67,50.49) .. controls (297.33,50.82) and (321.01,50.16) .. (344.67,50.49) .. controls (350.61,50.57) and (352.67,53.15) .. (352.67,58.49) ;
\draw  [line width=2.25] [line join = round][line cap = round] (235.37,107.93) .. controls (234.46,107.93) and (231.82,108.93) .. (235.37,108.93) ;
\draw  [line width=2.25] [line join = round][line cap = round] (357.37,111.93) .. controls (357.37,109.57) and (359.72,111.93) .. (357.37,111.93) ;

\draw (179,48.4) node [anchor=north west][inner sep=0.75pt]    {$\mathbb{H}$};

\draw (390,103) node [anchor=north west][inner sep=0.75pt]    {$\partial N_p(r)$};

\draw (257,21.4) node [anchor=north west][inner sep=0.75pt]    {$n$};
\draw (286,69.4) node [anchor=north west][inner sep=0.75pt]  [font=\footnotesize]  {$ \begin{array}{l}
\eta_{n}\\
\end{array}$};
\draw (227,92.4) node [anchor=north west][inner sep=0.75pt]  [font=\scriptsize]  {$z_{1}$};
\draw (356,95.4) node [anchor=north west][inner sep=0.75pt]  [font=\scriptsize]  {$z_{2}$};

\end{tikzpicture}
  \caption{The preimage of an $n-$excursion in the upper half-plane}
      \label{fig: E_n}
  \end{figure}

  From the formula of hyperbolic length, \cite[Equ. 1.1.2]{Bsr}, we have
  \begin{align}
  \ell(\eta_n)=d(z_1,z_2)=\cosh^{-1}(1+\frac{|z_1-z_2|^2}{Im z_1 Im z_2}) \geq \log (1+bn^2)\nonumber \\
  \geq 2\log n+\log b,
  \end{align}
  where $b$ is a constant only depending on $r$. 
  \end{proof}
  Let $A_n^p(T)$ denote the number of all $n-$exc's in $N_p(1)$ among the closed geodesics of length $\leq T$. 
  
  \begin{Prop}\label{prop: exc.number}
  There is a constant $C>0$ such that
  \begin{equation}
  A_n^p(T)\leq \frac{Ce^T}{n^2},
  \end{equation}
  for any $n \in \mathbb{N}$.
  \end{Prop}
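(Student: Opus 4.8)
The plan is to set up an explicit finite‑to‑one correspondence between the $n$‑excursions counted by $A_n^p(T)$ and elements of $\Gamma$ lying in a ``bounded displacement'' set, and then to invoke the disk‑packing bound behind Lemma~\ref{lemma: mrk.pnt}. Normalize so that the cusp $p$ sits at $\infty$ with parabolic stabilizer generated by $q\colon z\mapsto z+1$; then $N_p(1)$ lifts to $\{y\ge 1\}$, $\partial N_p(1)$ lifts to the horocycle $\{y=1\}$, and $J:=[0,1)\times\{1\}$ is a fundamental segment for $\langle q\rangle$. The starting observation is that an $n$‑excursion $\eta$ of a closed geodesic $\gamma\in\mathcal{G}_T$ cuts $\gamma$ into $\eta$ and a complementary arc $\eta'$ with endpoints on $\partial N_p(1)$, and by Lemma~\ref{lem: excursion.length}, $\ell(\eta)\ge 2\log n+c$, hence $\ell(\eta')\le \ell(\gamma)-\ell(\eta)\le T-2\log n-c$. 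Thus the factor $1/n^{2}=e^{-2\log n}$ is exactly the length deficit forced by the winding, and the problem reduces to counting short geodesic arcs between points of the compact horocycle $\partial N_p(1)$. (If $2\log n+c>T$ there are no $n$‑excursions, so assume $2\log n+c\le T$.)

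To turn this into a count in $\Gamma$, I would count oriented closed geodesics with a marked $n$‑excursion and divide by $2$ at the end. Given such a pair $(\gamma,\eta)$, lift $\eta$ to the arc $\widetilde\eta\subset\{y\ge 1\}$ whose initial endpoint $\widetilde z_1$ lies in $J$; this is the unique such lift, it lies on a unique lift $A$ of the axis of $\gamma$, and $A$ is the axis of a unique primitive hyperbolic $g\in\Gamma$ with $\gamma$ oriented compatibly. The terminal endpoint of $\widetilde\eta$ is $\widetilde z_2=(x_2,1)$ with $|x_1-x_2|$ within a bounded distance of $n$ (this is what ``winding number $n$'' means), so $x_2=m+f$ with $f\in[0,1)$ and $m$ one of boundedly many integers near $\pm n$; thus $\widetilde z_2=q^m(f,1)$ with $(f,1)\in J$. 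Since $\eta'$ lifts to the segment of $A$ from $\widetilde z_2$ to $g\widetilde z_1$, applying the isometry $q^{-m}$ gives
$$ d\big((f,1),\,q^{-m}g\,\widetilde z_1\big)=d(\widetilde z_2,\,g\widetilde z_1)=\ell(\eta')\le T-2\log n-c . $$
The assignment $(\gamma,\eta)\mapsto(m,\,h)$ with $h:=q^{-m}g\in\Gamma$ is injective: from $(m,h)$ we recover $g=q^m h$, hence $A$, hence the oriented $\gamma$ and the excursion $\eta$.

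Finally I would prove the uniform orbit‑counting bound
$$ \#\big\{h\in\Gamma:\ d(w,hw')\le R\ \text{for some }w,w'\in J\big\}\le C\,e^{R}, $$
with $C$ depending only on $X$ and $p$: cover (the closure of) $J$ by finitely many balls of radius $\tfrac12\epsilon_0$, where $\epsilon_0>0$ is a lower bound for the injectivity radius of $X$ on the compact set $\partial N_p(1)$; then the orbit $\{hw_j\}_{h\in\Gamma}$ is $2\epsilon_0$‑separated and contained in a ball of radius $R+\epsilon_0$, so disk‑packing (exactly as in the proof of Lemma~\ref{lemma: mrk.pnt}) gives the estimate. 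Combining the last two displays, taking $R=T-2\log n-c$, and summing over the boundedly many $m$ yields $A_n^p(T)\le C\,e^{T-2\log n-c}=C'e^{T}/n^{2}$ for primitive $\gamma$; the non‑primitive closed geodesics contribute only $O(e^{T/2}/n^{2})$ (a non‑primitive $\gamma=\delta^{k}$ with an $n$‑excursion forces $\ell(\delta)\le T/k$), which is absorbed into the constant. The only real care needed is the normalization and lift bookkeeping that make $(\gamma,\eta)\mapsto(m,h)$ well defined and injective, and the uniformity of $C$ over the compact boundary horocycle; the geometric content is just Lemma~\ref{lem: excursion.length} together with the elementary orbit‑counting estimate of Lemma~\ref{lemma: mrk.pnt}.
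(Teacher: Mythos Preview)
Your argument is correct and is essentially the same strategy as the paper's: both cut out the $n$-excursion (whose length is $\ge 2\log n+c$ by Lemma~\ref{lem: excursion.length}) and reduce to the orbit-counting estimate of Lemma~\ref{lemma: mrk.pnt} applied at scale $T-2\log n$. The only difference is packaging. The paper performs a geometric surgery on $X$: replace $\eta_n$ by an arc winding once, then slide both endpoints along $\partial N_p(1)$ to a single fixed basepoint $Q$, obtaining an injective map $S_T^n\to G_{T-2\log n+c}(Q)$ to which Lemma~\ref{lemma: mrk.pnt} applies directly. You instead work in the upper half-plane, record the primitive hyperbolic $g$ attached to the normalized lift, and renormalize by $q^{-m}$ to land in a bounded-displacement set; this is exactly the group-theoretic translation of the paper's ``slide to $Q$'' step, at the cost of having to sum over the finitely many values of $m$ and to treat non-primitive geodesics separately. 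Both routes are short and yield the same bound; the paper's surgery is slightly cleaner because it handles primitive and non-primitive $\gamma$ uniformly and avoids the lift bookkeeping, while your version makes the correspondence with $\Gamma$ more explicit.
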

  
  \begin{proof}
  From now on, by excursion, we mean an excursion whose endpoints are on the horocycle of length $1$ of a cusp $p$. Let $S_T^n$ be the set of pairs $(\gamma, \eta_n)$ where $\gamma$ is a closed geodesic with length $\leq T$ and $\eta_n$ an $n-$exc of $\gamma$ in $N_p(1)$. Let $G_T(Q)$ be the set of all geodesics with length $\leq T$ from a point $Q$ to itself. Fix a point $Q$ on $\partial N_p(1)$. We define an injective map $\psi: S_T^n \to G_{T-2\log n+c}(Q)$ for a constant $c$, as follows.
  
   Given $(\gamma,\eta_n)$, substitute $\eta_n$ with an arc going around the cusp once without changing its endpoints on the horocycle $\partial N_p(1)$. Then, move the endpoints of $\eta_n$ on the horocycle to $Q$. See Figure \ref{fig: cusp.exc.count}. We obtain a closed curve $\psi((\gamma,\eta_n))$ (with marked point $Q$) of length $\leq \ell(\gamma)-2\log n+c$, for a constant $c$ only depending on $Q$. 
   The upper bound on the length is derived from Lemma \ref{lem: excursion.length}, coupled with the observation that appending an arc and adjusting its endpoints to $Q$ results in a length change bounded by a constant (dependent only on $Q$).   
   
  \begin{figure}[h]
      \centering

\tikzset{every picture/.style={line width=0.75pt}} 

\begin{tikzpicture}[x=0.65pt,y=0.55pt,yscale=-0.6,xscale=0.6]

\draw    (28,238) .. controls (92,208) and (98,79) .. (92,22) ;
\draw    (98,23) .. controls (93,60) and (115,241) .. (171,228) ;
\draw  [dash pattern={on 4.5pt off 4.5pt}]  (42,230) .. controls (68,248) and (115,257) .. (155,227) ;
\draw  [dash pattern={on 4.5pt off 4.5pt}]  (42,230) .. controls (82,210) and (120,210) .. (155,227) ;
\draw [line width=0.75]    (66,240) .. controls (95,221) and (119,201) .. (120,178) ;
\draw  [dash pattern={on 0.84pt off 2.51pt}]  (76,176) .. controls (87,162) and (114,161) .. (120,178) ;
\draw [line width=0.75]    (76,176) .. controls (97,198) and (114,172) .. (109,134) ;
\draw [line width=0.75]    (67,200) .. controls (70,218) and (102,231) .. (137,236) ;
\draw  [dash pattern={on 0.84pt off 2.51pt}]  (67,200) .. controls (85,177) and (117,193) .. (128,192) ;
\draw [line width=0.75]    (87,138) .. controls (83,167) and (98,221) .. (128,192) ;
\draw  [dash pattern={on 0.84pt off 2.51pt}]  (87,138) .. controls (95,126) and (103,130) .. (109,134) ;
\draw   (186,143) -- (211.44,143) -- (211.44,137) -- (228.4,149) -- (211.44,161) -- (211.44,155) -- (186,155) -- cycle ;
\draw [line width=0.75]    (27,283) .. controls (26,269) and (44,251) .. (66,240) ;
\draw [line width=0.75]    (137,236) .. controls (168,252) and (179,266) .. (186,282) ;
\draw    (245,244) .. controls (309,214) and (315,85) .. (309,28) ;
\draw    (315,29) .. controls (310,66) and (332,247) .. (388,234) ;
\draw  [dash pattern={on 4.5pt off 4.5pt}]  (259,236) .. controls (285,254) and (332,263) .. (372,233) ;
\draw  [dash pattern={on 4.5pt off 4.5pt}]  (259,236) .. controls (299,216) and (337,216) .. (372,233) ;
\draw [line width=0.75]    (283,246) .. controls (312,227) and (342,236) .. (348,205) ;
\draw [line width=0.75]    (284,206) .. controls (287,224) and (319,237) .. (354,242) ;
\draw  [dash pattern={on 0.84pt off 2.51pt}]  (284,206) .. controls (306,187) and (340,195) .. (348,205) ;
\draw [line width=0.75]    (244,289) .. controls (243,275) and (261,257) .. (283,246) ;
\draw [line width=0.75]    (354,242) .. controls (385,258) and (396,272) .. (403,288) ;
\draw  [color={rgb, 255:red, 255; green, 255; blue, 255 }  ,draw opacity=1 ][line width=3] [line join = round][line cap = round] (82,22) .. controls (84.68,22) and (112.66,19.68) .. (107,31) .. controls (106.96,31.09) and (100.26,31.95) .. (100,32) .. controls (93.78,33.24) and (87.33,33.37) .. (81,33) .. controls (80.26,32.96) and (79.75,32) .. (79,32) ;
\draw  [color={rgb, 255:red, 255; green, 255; blue, 255 }  ,draw opacity=1 ][line width=3] [line join = round][line cap = round] (95,31) .. controls (101.67,31) and (108.34,31.33) .. (115,31) .. controls (115.32,30.98) and (108.42,27.47) .. (107,27) .. controls (103.97,25.99) and (91.62,24.34) .. (87,25) .. controls (86.09,25.13) and (90.31,26.83) .. (91,27) .. controls (95.41,28.1) and (98.66,28) .. (104,28) .. controls (106,28) and (99.98,27.72) .. (98,28) .. controls (95.96,28.29) and (90,27.13) .. (90,30) ;
\draw  [color={rgb, 255:red, 255; green, 255; blue, 255 }  ,draw opacity=1 ][line width=3] [line join = round][line cap = round] (346,34) .. controls (335.23,34) and (328.7,36) .. (320,36) ;
\draw  [color={rgb, 255:red, 255; green, 255; blue, 255 }  ,draw opacity=1 ][line width=3] [line join = round][line cap = round] (329,32) .. controls (334.01,32) and (339.02,31.55) .. (344,31) .. controls (346.98,30.67) and (350,31) .. (353,31) .. controls (354.8,31) and (349.79,29.09) .. (348,29) .. controls (339.67,28.56) and (331.34,28) .. (323,28) ;
\draw  [color={rgb, 255:red, 255; green, 255; blue, 255 }  ,draw opacity=1 ][line width=3] [line join = round][line cap = round] (20,30) .. controls (20,89) and (19.67,148) .. (20,207) .. controls (20.03,212.5) and (22.3,220.91) .. (24,226) .. controls (24.06,226.19) and (25,229) .. (25,229) .. controls (25,229) and (25,227.67) .. (25,227) ;
\draw    (438,246) .. controls (502,216) and (508,87) .. (502,30) ;
\draw    (508,31) .. controls (503,68) and (525,249) .. (581,236) ;
\draw  [dash pattern={on 4.5pt off 4.5pt}]  (452,238) .. controls (478,256) and (525,265) .. (565,235) ;
\draw  [dash pattern={on 4.5pt off 4.5pt}]  (452,238) .. controls (492,218) and (530,218) .. (565,235) ;
\draw [line width=0.75]    (510,256) .. controls (536,245) and (540,232) .. (544,214) ;
\draw [line width=0.75]    (472,219) .. controls (476,236) and (486,244) .. (510,256) ;
\draw  [dash pattern={on 0.84pt off 2.51pt}]  (472,219) .. controls (494,197) and (536,204) .. (544,214) ;
\draw [line width=0.75]    (437,291) .. controls (436,277) and (488,267) .. (510,256) ;
\draw [line width=0.75]    (510,256) .. controls (544,265) and (589,274) .. (596,290) ;
\draw  [color={rgb, 255:red, 255; green, 255; blue, 255 }  ,draw opacity=1 ][line width=3] [line join = round][line cap = round] (95,247) .. controls (88.2,247) and (96,247) .. (96,247) .. controls (97.76,248.17) and (94,251.08) .. (94,248) ;
\draw  [color={rgb, 255:red, 0; green, 0; blue, 0 }  ,draw opacity=1 ][line width=3] [line join = round][line cap = round] (98,247) .. controls (98,244.69) and (96,245.72) .. (96,248) ;
\draw  [color={rgb, 255:red, 0; green, 0; blue, 0 }  ,draw opacity=1 ][line width=3] [line join = round][line cap = round] (315,251) .. controls (315,247.76) and (322.86,253) .. (315,253) ;
\draw  [color={rgb, 255:red, 0; green, 0; blue, 0 }  ,draw opacity=1 ][line width=3] [line join = round][line cap = round] (510,256) .. controls (515.04,256) and (510.81,253.09) .. (509,254) .. controls (507.82,254.59) and (506.83,257) .. (510,257) ;
\draw  [color={rgb, 255:red, 255; green, 255; blue, 255 }  ,draw opacity=1 ][line width=3] [line join = round][line cap = round] (307,41) .. controls (314.67,41) and (322.34,41.33) .. (330,41) .. controls (331.2,40.95) and (329,38.67) .. (328,38) .. controls (326.96,37.3) and (324.42,35.35) .. (323,35) .. controls (316.21,33.3) and (310.46,34) .. (302,34) .. controls (300.67,34) and (304.72,33.63) .. (306,34) .. controls (307.87,34.53) and (309.07,36.79) .. (311,37) .. controls (313.98,37.33) and (317.01,36.7) .. (320,37) .. controls (320.47,37.05) and (321.47,37.96) .. (321,38) .. controls (317.61,38.31) and (306.32,38.21) .. (303,36) .. controls (300.91,34.61) and (312.37,29.05) .. (315,28) .. controls (316.28,27.49) and (320.37,27) .. (319,27) .. controls (299.68,27) and (309.62,30) .. (319,30) ;
\draw  [color={rgb, 255:red, 255; green, 255; blue, 255 }  ,draw opacity=1 ][line width=3] [line join = round][line cap = round] (491,43) .. controls (499.33,43) and (507.67,43) .. (516,43) .. controls (517.8,43) and (512.67,41.67) .. (511,41) .. controls (507.14,39.45) and (501.97,37.99) .. (498,37) .. controls (496.67,36.67) and (493.03,36.97) .. (494,36) .. controls (496.53,33.47) and (508.72,32.54) .. (513,32) .. controls (515.01,31.75) and (517.19,31.91) .. (519,31) .. controls (519.3,30.85) and (520.33,31) .. (520,31) .. controls (513.99,31) and (506.25,27.75) .. (502,32) .. controls (501.47,32.53) and (514.29,35.57) .. (516,36) .. controls (517.65,36.41) and (522.7,37) .. (521,37) .. controls (516.17,37) and (511.51,36.29) .. (507,35) .. controls (504.67,34.33) and (502.17,34.09) .. (500,33) .. controls (499.58,32.79) and (498.53,32) .. (499,32) .. controls (503.9,32) and (520.54,38.19) .. (510,39) .. controls (506.01,39.31) and (502,39) .. (498,39) ;
\draw  [color={rgb, 255:red, 255; green, 255; blue, 255 }  ,draw opacity=1 ][line width=3] [line join = round][line cap = round] (97,30) .. controls (93.85,30) and (91.75,32) .. (89,32) ;
\draw   (396,139) -- (421.44,139) -- (421.44,133) -- (438.4,145) -- (421.44,157) -- (421.44,151) -- (396,151) -- cycle ;

\draw (40,157.4) node [anchor=north west][inner sep=0.75pt]    {$E_{n}$};
\draw (88,254.4) node [anchor=north west][inner sep=0.75pt]    {$Q$};
\draw (311,258.4) node [anchor=north west][inner sep=0.75pt]    {$Q$};
\draw (504,261.4) node [anchor=north west][inner sep=0.75pt]    {$Q$};

\end{tikzpicture}
     \caption{How the map $\psi$ modifies a curve by removing an $n-$excursion}
       \label{fig: cusp.exc.count}
  \end{figure}
  
  Note that $\psi$ is injective because if we have a geodesic $\gamma$ with endpoints at $Q$ in the image of $\psi$, then $\psi^{-1}(\gamma)$ is determined uniquely. It is homotopic to the closed curve obtained from joining an $n-$exc to $\gamma$ at $Q$. 
  This implies that $A_n^p(T)=|S_T^n|$ is 
  \begin{equation}
  \leq |G_{T-2\log n+c}|<Ce^{T-2\log n+c}\leq C'\frac{e^T}{n^2},
  \end{equation}
  by Lemma \ref{lemma: mrk.pnt}.  
  \end{proof}
\textbf{Proof of Proposition \ref{prop: n.exc}.} It is implied directly from Proposition \ref{prop: exc.number}. Note that $\ell(\gamma_T) \sim e^T$ by Margulis' result on counting closed geodesics \cite{margulis.aspect}.

\textbf{Proof of Proposition \ref{prop: num.int.cusp}.}
Consider the excursions of $\gamma_T$ with the endpoints on $\partial N_p(1)$. For $r<1$, an $n-$exc enters $N_p(r)$ only when $n>n_r$ for a constant $n_r$ such that $n_r \to \infty$ as $r \to 0$. See Figure \ref{fig: cusp.n_r}.

\begin{figure}[H]
    \centering

\tikzset{every picture/.style={line width=0.75pt}} 

\begin{tikzpicture}[x=0.75pt,y=0.75pt,yscale=-0.7,xscale=0.7]

\draw    (160.2,175.2) -- (466.2,176.2) ;
\draw  [dash pattern={on 4.5pt off 4.5pt}]  (189.8,85.6) -- (390.2,89.2) ;
\draw  [draw opacity=0] (202.55,174.98) .. controls (202.54,174.33) and (202.54,173.68) .. (202.55,173.02) .. controls (203.23,124.74) and (245.07,86.18) .. (295.99,86.9) .. controls (346.65,87.61) and (387.22,126.93) .. (386.97,174.87) -- (294.76,174.32) -- cycle ; \draw   (202.55,174.98) .. controls (202.54,174.33) and (202.54,173.68) .. (202.55,173.02) .. controls (203.23,124.74) and (245.07,86.18) .. (295.99,86.9) .. controls (346.65,87.61) and (387.22,126.93) .. (386.97,174.87) ;  
\draw  [dash pattern={on 0.84pt off 2.51pt}]  (248.2,56.2) -- (249.89,166.87) -- (250,174) ;
\draw  [dash pattern={on 0.84pt off 2.51pt}]  (271.2,58.2) -- (272.81,163.89) -- (273,176) ;
\draw  [dash pattern={on 0.84pt off 2.51pt}]  (319.2,58.2) -- (321,176) ;
\draw [line width=1.5]  [dash pattern={on 1.69pt off 2.76pt}]  (291.1,68.1) -- (312.2,68) ;
\draw  [line width=0.75] [line join = round][line cap = round] (132.13,83.98) .. controls (120.11,84.17) and (123.27,71.53) .. (122.88,67.51) .. controls (122.83,67.03) and (118.82,64.64) .. (118.83,64.64) .. controls (123.31,63.84) and (125.71,63.04) .. (125.76,59.64) .. controls (125.91,48.07) and (125.07,36.52) .. (125.22,24.95) .. controls (125.26,22.04) and (127.82,21) .. (133.16,20.91) ;
\draw  [line width=2.25] [line join = round][line cap = round] (206.37,151.93) .. controls (205.46,151.93) and (202.82,152.93) .. (206.37,152.93) ;
\draw  [line width=2.25] [line join = round][line cap = round] (384.37,155.93) .. controls (384.37,153.57) and (386.72,155.93) .. (384.37,155.93) ;
\draw  [dash pattern={on 4.5pt off 4.5pt}]  (184,153) -- (414.8,156.6) ;
\draw  [line width=0.75] [line join = round][line cap = round] (208.7,50.07) .. controls (208.9,38.05) and (244.41,41.98) .. (255.8,41.84) .. controls (257.14,41.82) and (264.15,37.96) .. (264.15,37.97) .. controls (266.15,42.49) and (268.25,44.94) .. (277.85,45.19) .. controls (310.52,46.05) and (343.22,45.93) .. (375.89,46.79) .. controls (384.1,47) and (386.9,49.63) .. (386.81,54.96) ;
\draw  [dash pattern={on 0.84pt off 2.51pt}]  (226.2,57.2) -- (228,175) ;
\draw  [dash pattern={on 0.84pt off 2.51pt}]  (200.75,57.18) -- (202.55,174.98) ;
\draw  [dash pattern={on 0.84pt off 2.51pt}]  (341.2,58.2) -- (343,176) ;
\draw  [dash pattern={on 0.84pt off 2.51pt}]  (363.2,56.2) -- (365,174) ;
\draw  [dash pattern={on 0.84pt off 2.51pt}]  (385.17,57.07) -- (386.97,174.87) ;

\draw (426,153.4) node [anchor=north west][inner sep=0.75pt]    {$\mathbb{H}$};
\draw (259,15.4) node [anchor=north west][inner sep=0.75pt]    {$n_{r}$};
\draw (330,105.4) node [anchor=north west][inner sep=0.75pt]  [font=\footnotesize]  {$ \begin{array}{l}
E_{n}\\
\end{array}$};
\draw (184,137.4) node [anchor=north west][inner sep=0.75pt]  [font=\scriptsize]  {$z_{1}$};
\draw (386,139.4) node [anchor=north west][inner sep=0.75pt]  [font=\scriptsize]  {$z_{2}$};
\draw (120,142.4) node [anchor=north west][inner sep=0.75pt]  [font=\small]  {$\partial N_{p}( 1)$};
\draw (128,78.4) node [anchor=north west][inner sep=0.75pt]  [font=\small]  {$\partial N_{p}( r)$};
\draw (65,52.4) node [anchor=north west][inner sep=0.75pt]  [font=\small]  {$N_{p}( r)$};

\end{tikzpicture}

    \caption{An $n-$excursion enters $N_p(r)$ whenever $n>n_r$, where $n_r$ is a constant that tends to $\infty$ as $r \to 0$.}
    \label{fig: cusp.n_r}
\end{figure}

We know from Lemma \ref{lem: n,m,exc.int} that the intersection number of an $n-$exc and an $m-$exc is $\leq 2\min(m,n)+2 \leq 4\min(m,n)$ (we can assume $n,m >1$).
Therefore, we have 
\begin{align}
P_*I(\gamma_T,\gamma_T)(N_p(r))\leq \sum \limits_{i,j>n_r} 4\min(i,j) A_i^p(T)A_j^p(T) \nonumber \\
\leq \sum \limits_{i,j>n_r} 4C^2\min(i,j) \frac{e^{2T}}{i^2j^2} \leq 8C^2e^{2T} \sum \limits_{i=n_r}^\infty \frac{1}{i^2}(\sum \limits_{j=n_r}^i \frac{1}{j}) \nonumber \\      
\leq 8C^2e^{2T} \sum \limits_{i=n_r}^\infty \frac{\log i}{i^2}.
\end{align}

The sum $\sum \limits_{i=1}^\infty \log i/i^2$ is finite, so the tail $\sum \limits_{i=n_r}^\infty \log i/i^2$ tends to $0$ when $r \to 0$, as required.

\qed

 \section{Equidistribution}\label{sec: equid.}
 In this section, we prove Theorem \ref{theorem: main}.

Recall that the geodesic current $\gamma_T$ is the sum of the closed geodesics with length $\leq T$, and $L_X$ is the Liouville current of $X$.

We know closed geodesics are equidistributed in $X$ \cite[Thm. 9.1]{margulis.aspect}\cite{Bwn.equi}\cite{roblin}. In other words, we have:

\begin{Thm}(Margulis, Bowen, Parry, Roblin)\label{thm: Margulis.equi}
    Let $X$ be a complete hyperbolic surface of finite area. We have
    \begin{equation}
    \frac{\gamma_T}{\ell(\gamma_T)} \to \frac{L_X}{|L_X|}
    \end{equation}
    as $T \to \infty$, in $C_c^*(T_1(X))$.
\end{Thm}
 
Let $M$ be a metric space. The measures $\{ \mu_a \}$ are called \emph{tight} when for any $\epsilon>0$ there is a compact subset $K_{\epsilon} \subset M$ such that $\mu_a(K_{\epsilon}^c)< \epsilon$ for all indices $a$. The following theorem is a well-known result in measure theory (see, for example, \cite[2.18]{ErgodicTheory}). 
  \begin{Thm} \label{thm: prelim.tight}
Assume that the finite measures $\{\mu_n\}$ on $M$ converge to $\mu$ in $C_c^*(M)$ and are tight. Then the probability measures $\mu_n/|\mu_n|$ converge to $\mu/|\mu|$ in $C_c^*(M)$.     
  \end{Thm}

\textbf{Proof of Theorem \ref{theorem: main}.}
From Theorem \ref{thm: Margulis.equi} and Theorem \ref{thm: cont.I(.,.)}, we conclude as $T \to \infty$

\begin{equation}
\frac{I(\gamma_T,\gamma_T)}{\ell(\gamma_T)^2} \to \frac{I(L_X,L_X)}{|L_X|^2},\label{equ: equi.conv.I_1}
\end{equation}

in $\mathcal{B}^*$. Here, $\mathcal{B}^*$ is the dual of the set of continuous bounded functions with support in a subset $K \cap \mathcal{I}_1(X)$, for some compact subset $K$ of $\mathbb{P}(X) \oplus \mathbb{P}(X)$.  

Therefore, by pushing forward the measures in the convergence sequence (\ref{equ: equi.conv.I_1}) (by map $P: \mathcal{I}_1(X) \to X$), we conclude:
 \begin{equation} 
\frac{1}{{\ell(\gamma_T)}^2}P_*(I(\gamma_T, \gamma_T))  \to \frac{1}{\pi^4 (2g-2+n)^2}P_*(I(L_X, L_X)), 
\end{equation}
 as $T \to \infty$, in $C^*_c(X)$. 
 
 Note that we used the fact that the pullback of a continuous function with compact support on $X$ to $\mathcal{I}_1(X)$ is in $\mathcal{B}_*$.
 
 From Lemma \ref{cor: push Lio} and Lemma \ref{lemma: push c.g}, as $T \to \infty$, we have:
 \begin{equation}
 \frac{1}{{\ell(\gamma_T)}^2}\sum_{p \in \gamma_T \cap \gamma_T} m_p\delta_p \to \frac{\alpha(X)}{2\pi^3(2g-2+n)^2},
 \end{equation}
  in $C_c^*(X)$. The sum is over all the self-intersection points of $\gamma_T$.

These measures are not necessarily probability measures; therefore, to finish the proof, it is enough to show that they are tight (see Theorem \ref{thm: prelim.tight}). Let $K_{\epsilon}$ be the compact subset of $X$ obtained from removing a small horoball neighborhood $N_p(\epsilon)$ of each cusp $p$. The measure of a horoball neighborhood $N_p(\epsilon)$ with respect to $I(\gamma_T,\gamma_T)$ is the number of the intersection points in $N_p(\epsilon)$. On the other hand, we know from Margulis' result that $\ell(\gamma_T) \sim e^T$. Therefore, from Proposition \ref{prop: num.int.cusp}, we conclude that $N_p(\epsilon)$ has measure $<c_{\epsilon}$ with respect to the measure $I(\gamma_T,\gamma_T)/\ell(\gamma_T)^2$. In other words, the complement of $K_{\epsilon}$ has a measure $<c_{\epsilon}$ with respect to these measures and $c_{\epsilon} \to 0$ when $\epsilon \to 0$. This means that these measures are tight, as required.
\qed

\begin{Cor}\label{cor: equi.general}
    The set of the intersection points and two tangent lines at them are equidistributed in $\mathcal{I}_1(X)$ with respect to the measure $I(L_X,L_X)$ which is locally equal to $1/4 \sin(\theta) \, d\theta_1 d\theta_2 \alpha$. Here $\theta$ is the angle between $\theta_1$ and $\theta_2$.
\end{Cor}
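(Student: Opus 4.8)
The plan is to re-run the argument of Theorem \ref{theorem: main} one level up, directly on $\mathcal{I}_1(X)$ instead of on its projection to $X$. The convergence (\ref{equ: equi.conv.I_1}), namely $I(\gamma_T,\gamma_T)/\ell(\gamma_T)^2\to I(L_X,L_X)/|L_X|^2$, was proved in $\mathcal{B}^*$, the dual of the bounded continuous functions supported in $K\cap\mathcal{I}_1(X)$ for some compact $K\subset\mathbb{P}(X)\oplus\mathbb{P}(X)$. A compact subset of $\mathcal{I}_1(X)$ is compact in $\mathbb{P}(X)\oplus\mathbb{P}(X)$ and (being closed there) disjoint from $\triangle$, so every $f\in C_c(\mathcal{I}_1(X))$ is among those test functions, with $K=\operatorname{supp}f$; hence (\ref{equ: equi.conv.I_1}) already gives convergence in $C_c^*(\mathcal{I}_1(X))$. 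By Theorem \ref{thm: prelim.tight} it therefore suffices to prove that the measures $I(\gamma_T,\gamma_T)/\ell(\gamma_T)^2$ are \emph{tight} on $\mathcal{I}_1(X)$; the limit of the normalized measures is then $I(L_X,L_X)/i(L_X,L_X)$, where $i(L_X,L_X)=\pi^2(2g-2+n)$ by Corollary \ref{cor: push Lio} and $I(L_X,L_X)$ has the asserted local form $\tfrac14\sin(\theta)\,d\theta_1\,d\theta_2\,\alpha$ by Proposition \ref{prop: meas.I(X)}.

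Tightness on $\mathcal{I}_1(X)$ must rule out two escape mechanisms: mass running into the cusps, and mass accumulating on the diagonal $\triangle$ (the latter being the new feature, as it was collapsed by pushing forward to $X$ in Theorem \ref{theorem: main}). Fix $\epsilon>0$. For each cusp $p$, Proposition \ref{prop: num.int.cusp} together with $\ell(\gamma_T)\sim e^T$ gives $I(\gamma_T,\gamma_T)\bigl(P^{-1}(N_p(\delta))\bigr)/\ell(\gamma_T)^2<c_\delta$ for all large $T$, with $c_\delta\to0$ as $\delta\to0$; choose $\delta$ so small that $\sum_p c_\delta<\epsilon$, and let $\widehat{K}_\delta\subset\mathbb{P}(X)\oplus\mathbb{P}(X)$ be the compact set of triples lying over the compact set $K_\delta:=X\setminus\bigcup_p N_p(\delta)$. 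Apply Proposition \ref{prop: neighbor.diagonal} with $B=C=L_X$ and $K=\widehat{K}_\delta$: it produces neighborhoods $V_\epsilon,U_\epsilon$ of $L_X$ and a neighborhood $O_\epsilon\subset\mathcal{I}_1(X)$ of $\triangle$ with $I(C',B')(O_\epsilon\cap\widehat{K}_\delta)<\epsilon$ for $B'\in V_\epsilon$ and $C'\in U_\epsilon$. By the equidistribution of closed geodesics (Theorem \ref{thm: Margulis.equi}), the current $\gamma_T/\ell(\gamma_T)$ lies in $V_\epsilon\cap U_\epsilon$ for all large $T$, so $I(\gamma_T,\gamma_T)(O_\epsilon\cap\widehat{K}_\delta)/\ell(\gamma_T)^2<\epsilon$. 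The set $\widehat{K}_\delta\setminus O_\epsilon$ is compact in $\mathcal{I}_1(X)$, and $\mathcal{I}_1(X)\setminus(\widehat{K}_\delta\setminus O_\epsilon)\subset P^{-1}\bigl(\bigcup_p N_p(\delta)\bigr)\cup(O_\epsilon\cap\widehat{K}_\delta)$ carries mass $<2\epsilon$ under $I(\gamma_T,\gamma_T)/\ell(\gamma_T)^2$ for all large $T$. Shrinking the compact set to absorb the finitely many remaining values of $T$ (each corresponding finite measure being inner regular) yields tightness.

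With tightness in hand, Theorem \ref{thm: prelim.tight} gives $I(\gamma_T,\gamma_T)/i(\gamma_T,\gamma_T)\to I(L_X,L_X)/\bigl(\pi^2(2g-2+n)\bigr)$ in $C_c^*(\mathcal{I}_1(X))$. As in the proof of Lemma \ref{lemma: push c.g}, the measure $I(\gamma_T,\gamma_T)$ on $\mathcal{I}_1(X)$ is supported exactly on the triples $(p,\lambda_1,\lambda_2)$ where $p$ is a transverse self-intersection point of $\gamma_T$ and $\lambda_1,\lambda_2$ are the two tangent lines of $\gamma_T$ at $p$, counted with multiplicity; the displayed limit is therefore precisely the claimed equidistribution of the intersection points together with the directions at them. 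The step demanding the most care is the diagonal part of the tightness argument, but Proposition \ref{prop: neighbor.diagonal} is exactly the tool built for it, so the remaining work is the bookkeeping of assembling the compact exhaustion from the cusp estimate and the diagonal estimate.
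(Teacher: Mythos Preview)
Your argument is correct, but it does substantially more than the paper does. The paper's own proof of this corollary is a one-line reference to equation (\ref{equ: equi.conv.I_1}) and Proposition \ref{prop: meas.I(X)}; that is, the paper is content to read ``equidistributed with respect to $I(L_X,L_X)$'' as the convergence $I(\gamma_T,\gamma_T)/\ell(\gamma_T)^2\to I(L_X,L_X)/|L_X|^2$ already established in $\mathcal{B}^*$, together with the identification of the local form of the limit. It does not carry out a separate tightness argument on $\mathcal{I}_1(X)$.

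You, by contrast, upgrade this to genuine convergence of the probability measures $I(\gamma_T,\gamma_T)/i(\gamma_T,\gamma_T)$ on $\mathcal{I}_1(X)$, which is a strictly stronger reading. To do so you correctly identify that tightness on $\mathcal{I}_1(X)$ requires controlling two things the push-forward to $X$ did not see separately: escape into the cusps (handled by Proposition \ref{prop: num.int.cusp}, exactly as in Theorem \ref{theorem: main}) and accumulation on the diagonal $\triangle$ (handled by Proposition \ref{prop: neighbor.diagonal}). The second step is the genuinely new observation, and it is nice that Proposition \ref{prop: neighbor.diagonal} is precisely the uniform-in-a-neighborhood statement one needs. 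One cosmetic slip: when you invoke Proposition \ref{prop: neighbor.diagonal} you should center the neighborhoods at $B=C=L_X/|L_X|$ rather than $L_X$, since it is $\gamma_T/\ell(\gamma_T)$ that converges to $L_X/|L_X|$; this changes nothing substantive. Also, to ensure $\widehat{K}_\delta\setminus O_\epsilon$ is actually compact in $\mathcal{I}_1(X)$ you may want to pass to the interiors of the boxes $b_i\oplus b_i$ used to build $O_\epsilon$, a harmless adjustment.

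In short: the paper treats the corollary as an immediate restatement of (\ref{equ: equi.conv.I_1}); you prove the sharper probability-measure version by supplying the diagonal tightness, which the paper's tools make available but which it does not itself spell out.
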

\begin{proof}
It is implied from Equation (\ref{equ: equi.conv.I_1}) in the proof of Theorem \ref{theorem: main} and Proposition \ref{prop: meas.I(X)}.
\end{proof}

\section{Geodesic arcs}\label{sec: geod.arc}

Let $\eta$ be a finite-length geodesic arc. Define a measure $\mu_T$ as the sum of the delta measures (considered with multiplicity) at the intersection points between $\eta$ and all the closed geodesics in $\mathcal{G}_T$.

\begin{Cor}\label{cor: segment}
    The intersection points between $\eta$ and the closed geodesics are equidistributed on $\eta$. In other words, the probability measures $\mu_T/|\mu_T|$ converge to the normalized length measure $\ell/\ell(\eta)$ on $\eta$ as $T \to \infty$, in $C_c^*(\eta)$.
\end{Cor}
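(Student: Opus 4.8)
The plan is to rerun the geodesic-current argument from the first part of the proof of Theorem \ref{theorem: main}, but with the closed geodesic $\gamma_T$ in the first slot of the intersection measure replaced by the fixed finite arc $\eta$. Let $\widetilde\eta\subset\mathbb{P}(X)$ be the lift of $\eta$ to the tangent line bundle (the tangent lines tangent to $\eta$ along $\eta$). This is a compact leaf-segment, and exactly as in \S\ref{sec: Pre} it determines a transverse quantity for $\mathcal{F}$ — assigning to a transversal the number of its intersections with $\widetilde\eta$ — which is enough to form, for any geodesic current $C$, a measure $I(\eta,C):=P_1^{*}(\widetilde\eta)\times P_2^{*}(C)$ on $\mathcal{I}_1(X)$; it is supported over the points of $\eta$, and $\mathcal{F}_1$, $\mathcal{F}_2$ are transverse there away from the tangency locus $\{(p,T_p\eta,\lambda_2):\lambda_2=T_p\eta\}$. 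For a closed geodesic $\gamma$ transverse to $\eta$, the computation of Lemma \ref{lemma: push c.g} gives $P_{*}I(\eta,\gamma)=\sum_{p\in\eta\cap\gamma}m_p\delta_p$ (the at most one closed geodesic whose extension contains $\eta$ contributes no transverse interior intersection point), so summing over $\gamma\in\mathcal{G}_T$ yields $P_{*}I(\eta,\gamma_T)=\mu_T$.

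First I would establish the continuity statement: if geodesic currents $C_i$ converge to $C$ in $C_c^{*}(T_1(X))$, then $I(\eta,C_i)\to I(\eta,C)$ against every bounded continuous function supported in $K\cap\mathcal{I}_1(X)$ for a compact $K\subset\mathbb{P}(X)\oplus\mathbb{P}(X)$. This is Theorem \ref{thm: cont.I(.,.)} with the fixed arc $\eta$ in place of one of the two varying currents, and its proof carries over: one only needs that $I(\eta,C_i)$ puts small mass, uniformly in $i$, near the tangency locus, which is the content of Proposition \ref{prop: neighbor.diagonal} and is in fact simpler here since $\eta$ is a single fixed arc (the relevant near-tangency bound being Lemma \ref{lemma: int.arc.collar}). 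Crucially, the part of $\mathbb{P}(X)\oplus\mathbb{P}(X)$ lying over the compact arc $\eta$ is itself compact, so we may take $K$ to be a compact neighbourhood of it; then every intersection point of $\gamma_T$ with $\eta$ together with its two tangent lines lies in $K$, there is no escape of mass into the cusps, and the tightness argument of Part $2$ is not needed.

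Now apply this with $C_i=\gamma_T/\ell(\gamma_T)$, which converges to $L_X/|L_X|$ by Theorem \ref{thm: Margulis.equi}. Pushing the resulting convergence forward by $P:\mathcal{I}_1(X)\to X$ and testing against continuous functions on $\eta$ (whose pullbacks are bounded continuous and supported in $K\cap\mathcal{I}_1(X)$) gives
\[
\frac{\mu_T}{\ell(\gamma_T)}\longrightarrow\frac{P_{*}I(\eta,L_X)}{|L_X|}\quad\text{in }C_c^{*}(\eta).
\]
It remains to identify $P_{*}I(\eta,L_X)$. In the local coordinates $(x,\theta)$ for the geodesics crossing $\eta$, with $x$ the arclength along $\eta$ and $\theta$ the angle, one has $L_X=\tfrac12\sin\theta\,d\theta\,dx$, so $I(\eta,L_X)$ is locally $dx\times\tfrac12\sin\theta\,d\theta$ on the triples $(p,T_p\eta,\lambda_2)$ with $\lambda_2$ not tangent to $\eta$; integrating $\theta$ over $[0,\pi]$ leaves the length measure $\ell$ on $\eta$ (the arc version of Proposition \ref{prop: length.LX} and Corollary \ref{cor: push Lio}). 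Hence $\mu_T/\ell(\gamma_T)\to\ell/|L_X|$, so $|\mu_T|/\ell(\gamma_T)\to\ell(\eta)/|L_X|$, and dividing gives $\mu_T/|\mu_T|\to\ell/\ell(\eta)$ in $C_c^{*}(\eta)$, as claimed. (In fact one only needs that $P_{*}I(\eta,L_X)$ is a positive finite multiple of the length measure, which already follows from the rotational symmetry of $L_X$.)

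The main obstacle is the one flagged above: checking that the arc-versus-current intersection measure $I(\eta,\cdot)$ has uniformly small mass near the tangency locus, so that the convergence in $C_c^{*}(\eta)$ upgrades to convergence of total masses and hence of the normalised measures. This uses the estimate of Proposition \ref{prop: neighbor.diagonal} together with Lemma \ref{lemma: int.arc.collar}; everything else is a direct transcription of the first part of the proof of Theorem \ref{theorem: main}, with the simplification that compactness of $\eta$ makes the second part of that proof unnecessary.
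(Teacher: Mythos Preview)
Your proposal is correct and follows essentially the same approach as the paper's sketch: lift $\eta$ to a leaf segment in $\mathbb{P}(X)$, use it to define an intersection measure $I(\eta,\cdot)$ with geodesic currents, invoke the continuity argument of Theorem~\ref{thm: cont.I(.,.)} (with the compactness of $\eta$ replacing the cusp tightness of Part~2), apply Theorem~\ref{thm: Margulis.equi}, and identify $P_*I(\eta,L_X)$ with the length measure via Proposition~\ref{prop: length.LX}. Your write-up is in fact more explicit than the paper's sketch, in particular in flagging the near-tangency mass control needed to pass to total masses and hence to the normalised limit.
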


\textbf{Sketch of the proof.}
The measure corresponding to $\eta$ in $T_1(X)$ is not a geodesic current, but the same proof as Theorem \ref{theorem: main} (the first part of the proof that gives the weaker convergence), works in this case too. Consider the preimage of $\eta$ in $\mathbb{P}(X)$ under the map $P':\mathbb{P}(X)\to X$ and the corresponding transverse measure to $\mathcal{F}$. Note that the assigned measure to a transverse plane $V$ is still invariant when $V$ moves along the leaves of $\mathcal{F}$ in a local neighborhood (as long as it is transverse to $\eta$). Recall that $\mathcal{I}_1(X)$ has two transverse foliations $\mathcal{F}_1$ and $\mathcal{F}_2$. We can see that $P_1^*(\eta)$ induces a measure on each $\mathcal{F}_2$ leaf ($P_1$ is the projection that forgets the second component of $\mathcal{I}_1(X)$). Therefore, we can similarly define the intersection measure between $\eta$ and a geodesic current. Since $\eta$ is compact, an argument similar to Theorem \ref{thm: cont.I(.,.)} implies that the intersection measure between $\eta$ and $\gamma_T$ converges to the intersection measure between $\eta$ and $L_X$  as $T \to \infty$. We know that the push-forward of the intersection measure between $\eta$ and $L_X$ to $\mathbb{P}(X)$ is the length measure of $\eta$ (the length measure of a set $A$ is defined as the length of $A \cap \eta$). For more details, see Proposition \ref{prop: length.LX}. \qed

\bibliography{biblio}
\bibliographystyle{math}

\end{document}